\DeclareMathAlphabet\gothic{U}{euf}{m}{n}
\def\eqnarray{\stepcounter{equation}\let\@currentlabel=\theequation
\global\@eqnswtrue
\tabskip\@centering\let\\=\@eqncr
$$\halign to \displaywidth\bgroup\hfil\global\@eqcnt\z@
  $\displaystyle\tabskip\z@{##}$&\global\@eqcnt\@ne
  \hfil$\displaystyle{{}##{}}$\hfil
  &\global\@eqcnt\tw@ $\displaystyle{##}$\hfil
  \tabskip\@centering&\llap{##}\tabskip\z@\cr}
\def\endeqnarray{\@@eqncr\egroup
      \global\advance\c@equation\m@ne$$\global\@ignoretrue}
\def\@yeqncr{\@ifnextchar [{\@xeqncr}{\@xeqncr[5pt]}}
\begin{document}
\bibliographystyle{tom}

\newtheorem{lemma}{Lemma}[section]
\newtheorem{thm}[lemma]{Theorem}
\newtheorem{cor}[lemma]{Corollary}
\newtheorem{prop}[lemma]{Proposition}

\theoremstyle{definition}

\newtheorem{remark}[lemma]{Remark}
\newtheorem{exam}[lemma]{Example}
\newtheorem{definition}[lemma]{Definition}

\newcommand{\gota}{\gothic{a}}
\newcommand{\gotb}{\gothic{b}}
\newcommand{\gotc}{\gothic{c}}
\newcommand{\gotd}{\gothic{d}}
\newcommand{\gote}{\gothic{e}}
\newcommand{\gotf}{\gothic{f}}
\newcommand{\gotg}{\gothic{g}}
\newcommand{\gothh}{\gothic{h}}
\newcommand{\gotk}{\gothic{k}}
\newcommand{\gotm}{\gothic{m}}
\newcommand{\gotn}{\gothic{n}}
\newcommand{\gotp}{\gothic{p}}
\newcommand{\gotq}{\gothic{q}}
\newcommand{\gotr}{\gothic{r}}
\newcommand{\gots}{\gothic{s}}
\newcommand{\gott}{\gothic{t}}
\newcommand{\gotu}{\gothic{u}}
\newcommand{\gotv}{\gothic{v}}
\newcommand{\gotw}{\gothic{w}}
\newcommand{\gotz}{\gothic{z}}
\newcommand{\gotA}{\gothic{A}}
\newcommand{\gotB}{\gothic{B}}
\newcommand{\gotG}{\gothic{G}}
\newcommand{\gotL}{\gothic{L}}
\newcommand{\gotS}{\gothic{S}}
\newcommand{\gotT}{\gothic{T}}

\newcounter{teller}
\renewcommand{\theteller}{(\alph{teller})}
\newenvironment{tabel}{\begin{list}%
{\rm  (\alph{teller})\hfill}{\usecounter{teller} \leftmargin=1.1cm
\labelwidth=1.1cm \labelsep=0cm \parsep=0cm}
                      }{\end{list}}

\newcounter{tellerr}
\renewcommand{\thetellerr}{(\roman{tellerr})}
\newenvironment{tabeleq}{\begin{list}%
{\rm  (\roman{tellerr})\hfill}{\usecounter{tellerr} \leftmargin=1.1cm
\labelwidth=1.1cm \labelsep=0cm \parsep=0cm}
                         }{\end{list}}

\newcounter{tellerrr}
\renewcommand{\thetellerrr}{(\Roman{tellerrr})}
\newenvironment{tabelR}{\begin{list}%
{\rm  (\Roman{tellerrr})\hfill}{\usecounter{tellerrr} \leftmargin=1.1cm
\labelwidth=1.1cm \labelsep=0cm \parsep=0cm}
                         }{\end{list}}

\newcounter{proofstep}
\newcommand{\nextstep}{\refstepcounter{proofstep}\vertspace \par 
          \noindent{\bf Step \theproofstep} \hspace{5pt}}
\newcommand{\firststep}{\setcounter{proofstep}{0}\nextstep}

\newcommand{\Ni}{\mathds{N}}
\newcommand{\Qi}{\mathds{Q}}
\newcommand{\Ri}{\mathds{R}}
\newcommand{\Ci}{\mathds{C}}
\newcommand{\Ti}{\mathds{T}}
\newcommand{\Zi}{\mathds{Z}}
\newcommand{\Fi}{\mathds{F}}
\newcommand{\Ki}{\mathds{K}}

\renewcommand{\proofname}{{\bf Proof}}

\newcommand{\simh}{{\stackrel{{\rm cap}}{\sim}}}
\newcommand{\ad}{{\mathop{\rm ad}}}
\newcommand{\Ad}{{\mathop{\rm Ad}}}
\newcommand{\alg}{{\mathop{\rm alg}}}
\newcommand{\clalg}{{\mathop{\overline{\rm alg}}}}
\newcommand{\Aut}{\mathop{\rm Aut}}
\newcommand{\arccot}{\mathop{\rm arccot}}
\newcommand{\capp}{{\mathop{\rm cap}}}
\newcommand{\rcapp}{{\mathop{\rm rcap}}}
\newcommand{\curl}{\mathop{\rm curl}}
\newcommand{\diam}{\mathop{\rm diam}}
\newcommand{\divv}{\mathop{\rm div}}
\newcommand{\dom}{\mathop{\rm dom}}
\newcommand{\codim}{\mathop{\rm codim}}
\newcommand{\RRe}{\mathop{\rm Re}}
\newcommand{\IIm}{\mathop{\rm Im}}
\newcommand{\tr}{{\mathop{\rm Tr \,}}}
\newcommand{\Tr}{{\mathop{\rm Tr \,}}}
\newcommand{\Vol}{{\mathop{\rm Vol}}}
\newcommand{\card}{{\mathop{\rm card}}}
\newcommand{\rank}{\mathop{\rm rank}}
\newcommand{\supp}{\mathop{\rm supp}}
\newcommand{\sgn}{\mathop{\rm sgn}}
\newcommand{\essinf}{\mathop{\rm ess\,inf}}
\newcommand{\esssup}{\mathop{\rm ess\,sup}}
\newcommand{\Int}{\mathop{\rm Int}}
\newcommand{\lcm}{\mathop{\rm lcm}}
\newcommand{\loc}{{\rm loc}}
\newcommand{\HS}{{\rm HS}}
\newcommand{\Trn}{{\rm Tr}}
\newcommand{\n}{{\rm N}}
\newcommand{\WOT}{{\rm WOT}}

\newcommand{\at}{@}

\newcommand{\spann}{\mathop{\rm span}}
\newcommand{\one}{\mathds{1}}

\hyphenation{groups}
\hyphenation{unitary}

\newcommand{\ca}{{\cal A}}
\newcommand{\cb}{{\cal B}}
\newcommand{\cc}{{\cal C}}
\newcommand{\cd}{{\cal D}}
\newcommand{\ce}{{\cal E}}
\newcommand{\cf}{{\cal F}}
\newcommand{\ch}{{\cal H}}
\newcommand{\chs}{{\cal HS}}
\newcommand{\ci}{{\cal I}}
\newcommand{\ck}{{\cal K}}
\newcommand{\cl}{{\cal L}}
\newcommand{\cm}{{\cal M}}
\newcommand{\cn}{{\cal N}}
\newcommand{\co}{{\cal O}}
\newcommand{\cp}{{\cal P}}
\newcommand{\cs}{{\cal S}}
\newcommand{\ct}{{\cal T}}
\newcommand{\cx}{{\cal X}}
\newcommand{\cy}{{\cal Y}}
\newcommand{\cz}{{\cal Z}}

\vspace*{1cm}
\begin{center}
{\large\bf A Friedlander type estimate for Stokes operators} \\[5mm]
\large  C. Denis and A.F.M. ter Elst

\end{center}

\vspace{5mm}

\begin{list}{}{\leftmargin=1.8cm \rightmargin=1.8cm \listparindent=10mm 
   \parsep=0pt}
\item
\small
{\sc Abstract}.
Let $\Omega \subset \Ri^d$ be a bounded open connected set with Lipschitz boundary.
Let $A^N$ and $A^D$ be the Neumann Stokes operator and Dirichlet Stokes operator on $\Omega$,
respectively.
Further let $\lambda_1^N \leq \lambda_2^N \leq \ldots$ and 
$\lambda_1^D \leq \lambda_2^D \leq \ldots$ be the eigenvalues of $A^N$ and~$A^D$
repeated with multiplicity, respectively.
Then 
\[
\lambda_{n+1}^N < \lambda_n^D
\]
for all $n \in \Ni$.
\end{list}

\let\thefootnote\relax\footnotetext{
\begin{tabular}{@{}l}
{\em Mathematics Subject Classification}. 47A75, 35P15.\\
{\em Keywords}. Eigenvalues, counting function, Stokes operator.
\end{tabular}}

\section{Introduction} \label{Sfried1}

If $\lambda_1^D\leq \lambda_2^D\leq \ldots $ are the eigenvalues of the 
Dirichlet Laplacian and $\lambda_1^N \leq \lambda_2^N \leq \ldots$ the eigenvalues 
of the Neumann Laplacian on a domain $\Omega$ with Lipschitz boundary, then 
\begin{equation}
\lambda_{n+1}^N<\lambda_n^D
\label{eSfried1;20}
\end{equation}
for all $n\in \Ni$.
This result has a long history, starting with Payne \cite{Payne} and P\'olya \cite{Polya}.
In 1991, Friedlander \cite{Friedlander} proved that $\lambda_{n+1}^N\leq \lambda_n^D$ 
for a $C^1$ domain, using the Dirichlet-to-Neumann operator.
The method was modified and extended by Filonov \cite{Fil2} to prove a strict 
inequality on more general domains. 
A different and independent proof was obtained by Arendt--Mazzeo \cite{ArM2}.
On the other hand, Mazzeo \cite{Mazzeo} gave a counter example for manifolds.
Under certain geometric conditions on three dimensional domains
Hansson \cite{Hansson} proved (\ref{eSfried1;20}) for Dirichlet and Neumann eigenvalues
of the Heisenberg Laplacian.
This was extended to general Heisenberg groups and fairly general domains
by Frank--Laptev \cite{FrankLaptev}.
Other results are for powers $(-\Delta)^m$ of the Laplacian by 
Provenzano \cite{Provenzano} and a comparison between Robin and Dirichlet eigenvalues
by Gesztesy--Mitrea \cite{GesM} or for certain operators with mixed boundary conditions
by Lotoreichik--Rohleder \cite{LotRoh}.
In \cite{BRS} Behrndt--Rohleder--Stadler considered external domains and added a 
potential to the Laplacian.

The aim of this paper is to prove a similar result for the Stokes operator.
Let $\Omega \subset \Ri^d$ be an open bounded connected set.
One can define two versions of the Stokes operator: a Dirichlet version and 
a Neumann version.
The Dirichlet Stokes operator $A^D$ is defined in the closure 
$L_{2,\sigma}(\Omega)$ of solenoidal test functions
$ \{ u \in C_c^\infty(\Omega,\Ci^d) : \divv u = 0 \} $
in $L_2(\Omega,\Ci^d)$.
If $u,f \in L_{2,\sigma}(\Omega)$, then $u \in D(A^D)$ and $A^D u = f$
if and only if $u \in H^1_0(\Omega,\Ci^d)$, $\divv u = 0$ and 
there exists a $\pi \in L_2(\Omega)$ such that 
$f = - \Delta u + \nabla \pi$ in $H^{-1}(\Omega,\Ci^d)$.
Then $A^D$ is a positive self-adjoint operator in $L_{2,\sigma}(\Omega)$
with discrete spectrum.
Let $\lambda_1^D \leq \lambda_2^D \leq \ldots$ be the eigenvalues of $A^D$
repeated with multiplicity.

The Neumann Stokes operator $A^N$ is defined in the Hilbert space
$ \{ u \in L_2(\Omega,\Ci^d) : \divv u = 0 \} $.
In Proposition~\ref{pfried303} we shall give a precise definition of the 
self-adjoint operator~$A^N$.
Loosely speaking, $u \in D(A^N)$ and $A^N u = f$ if 
$u \in H^1(\Omega,\Ci^d)$, $\divv u = 0$ and there exists a
$\pi \in L_2(\Omega)$ such that $f = - \Delta u + \nabla \pi$ in $H^{-1}(\Omega,\Ci^d)$
and $\partial_\nu u = (\Tr \pi) \nu$, where $\partial_\nu$ is the 
outward normal derivative.
Also $A^N$ is a positive self-adjoint operator with discrete spectrum.
Let $\lambda_1^N \leq \lambda_2^N \leq \ldots$ be the eigenvalues of $A^D$
repeated with multiplicity.
The main result of this paper is that 
\begin{equation}
\lambda_{n+1}^N<\lambda_n^D
\label{eSfried1;1}
\end{equation}
for all $n \in \Ni$.

Although the Dirichlet and Neumann Laplacians are self-adjoint operators in 
the same Hilbert space $L_2(\Omega)$, this is not the case for the 
Stokes Dirichlet operator $A^D$ and Stokes Neumann operator~$A^N$.
The Stokes Dirichlet operator $A^D$ is a self-adjoint operator 
in the orthogonal complement in $L_2(\Omega,\Ci^d)$ of the space
\[
\{ \nabla \pi : \pi \in H^1(\Omega) \}
,  \]
whilst the Stokes Neumann operator~$A^N$ is a self-adjoint operator 
in the orthogonal complement in $L_2(\Omega,\Ci^d)$ of the space
\[
\{ \nabla \pi : \pi \in H^1_0(\Omega) \}
.  \]
Nevertheless, we relate the eigenvalues in~(\ref{eSfried1;1}).

As in Friedlander's original paper, the main ingredient for 
our proof is the Dirichlet-to-Neumann operator,
adapted to the Stokes problem.
We use sesquilinear form methods as in \cite{ArM} and \cite{ArM2}.
In Section~\ref{Sfried2} we prove an abstract theorem, Theorem~\ref{tstokes501},
on eigenvalue comparison of Dirichlet-type and Neumann-type operators
associated with forms, which relies (and expands) on a combination of the papers 
\cite{ArM}, \cite{ArM2}, \cite{AEKS} and the BSc(Honours) thesis \cite{Gor1}.
As an immediate application we obtain in Theorem~\ref{tfried202}
the classical result for the Laplacian on a Lipschitz domain.
Section~\ref{Sfried3} begins with the description of the Dirichlet and 
Neumann versions of the Stokes operator on a Lipschitz domain, as well 
as of the related Dirichlet-to-Neumann operator.
We then apply our abstract eigenvalue theorem, Theorem~\ref{tstokes501}, to obtain a 
Stokes version of the Friedlander eigenvalue theorem.

\section{An abstract Friedlander comparison theorem} \label{Sfried2}

We recall the basic notions of form methods as developed in 
\cite{AE2} and \cite{AEKS}, which extend the classical form methods 
of Kato~\cite{Kat1} and Lions~\cite{Lio4}.
Since we only use self-adjoint graphs in this paper, we restrict our attention to 
symmetric forms. 

Let $V$, $K$ be Hilbert spaces and $\gota \colon V \times V \to \Ci$ 
be a sesquilinear form. 
The form $\gota$ is called {\bf continuous} if there is an $M > 0$ such that
$|\gota(u,v)| \leq M \, \|u\|_V \, \|v\|_V$ for all $u,v \in V$.
The form $\gota$ is called {\bf positive} if $\gota(u,u) \geq 0$
for all $u \in V$ and it is called {\bf symmetric} if 
$\gota(v,u) = \overline{\gota(u,v)}$ for all $u,v \in V$.
Let $j \colon V \to K$ be a continuous linear map.
In Lions \cite{Lio4}, the map $j$ is an inclusion map.
We say that $\gota$ is {\bf $j$-elliptic} if there are 
$\omega,\mu > 0$ such that 
\[
\RRe \gota(u,u) + \omega \, \|j(u)\|_K^2
\geq \mu \, \|u\|_V^2
\]
for all $u \in V$.
The {\bf graph associated with $(\gota,j)$} is the subspace
\[
\{ (j(u),f) \in K \times K : u \in V \mbox{ and } 
   \gota(u,v) = (f,j(v))_K \mbox{ for all } v \in V \} 
\]
in $K \times K$.

There are a number of sufficient conditions to conclude that the 
graph associated with $(\gota,j)$ is a self-adjoint graph if 
$\gota$ is continuous and symmetric.
For a summary of the terminology in self-adjoint graphs we refer
to \cite{AEKS} Section~3 and \cite{BE} Section~2.
In this paper we use the following two sufficient conditions.

\begin{prop} \label{pfried240}
Let $V$, $K$ be Hilbert spaces and $\gota \colon V \times V \to \Ci$ 
be a continuous symmetric sesquilinear form. 
Let $j \colon V \to K$ be a continuous linear map.
Suppose at least one of the following conditions is valid.
\begin{tabel} 
\item \label{pfried240-1}
The form $\gota$ is $j$-elliptic.
\item \label{pfried240-2}
There exists a Hilbert space $H$ and a compact linear map 
$i \colon V \to H$ such that $\gota$ is $i$-elliptic.
\end{tabel}
Then the graph $A$ associated with $(\gota,j)$ is a self-adjoint 
graph which is lower bounded.

Moreover, if $j$ is compact, then $A$ has compact resolvent 
and in particular a discrete spectrum.
\end{prop}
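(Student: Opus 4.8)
The plan is to reduce both cases to the classical Lax--Milgram / Kato situation and then invoke the known self-adjointness criterion for $j$-elliptic forms. Assume first condition~\ref{pfried240-1}, so that $\gota$ is $j$-elliptic with constants $\omega,\mu>0$. Since $\gota$ is symmetric and continuous, the sesquilinear form $\gotb(u,v) := \gota(u,v) + \omega\,(j(u),j(v))_K$ is symmetric, continuous on $V$, and coercive: $\RRe \gotb(u,u) \geq \mu\,\|u\|_V^2$. This is exactly the hypothesis under which the theory of \cite{AE2}, \cite{AEKS} guarantees that the graph associated with $(\gota,j)$ is self-adjoint and lower bounded (the lower bound being at least $-\omega$). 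So in this case I would simply cite the relevant statement from \cite{AEKS} (Section~3) together with the observation that the associated graph is unchanged up to the shift by $\omega$ coming from the term $\omega\,\|j(u)\|_K^2$; concretely, $A$ is self-adjoint iff $A+\omega$ is, and $(\gotb,j)$ is a $j$-elliptic pair in the strict sense of Lions with $\omega$ replaced by any positive constant, hence its graph is self-adjoint by the classical result.

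For condition~\ref{pfried240-2}, suppose instead that there is a Hilbert space $H$ and a compact linear map $i\colon V\to H$ such that $\gota$ is $i$-elliptic. The idea is to build an auxiliary Hilbert space on which the pair becomes $j$-elliptic in the sense of case~\ref{pfried240-1} and then appeal to the first case. Put $\widehat{K} := K \times H$ and $\widehat\jmath := (j,i)\colon V\to\widehat K$, which is continuous. Then
\[
\RRe\gota(u,u) + \omega\,\|\widehat\jmath(u)\|_{\widehat K}^2
= \RRe\gota(u,u) + \omega\,\|j(u)\|_K^2 + \omega\,\|i(u)\|_H^2
\geq \omega\,\|i(u)\|_H^2 + \bigl(\RRe\gota(u,u) + \omega\,\|i(u)\|_H^2\bigr)/1,
\]
and using the $i$-ellipticity inequality this is $\geq \mu\,\|u\|_V^2$; hence $\gota$ is $\widehat\jmath$-elliptic. (Here I am being slightly schematic; the honest computation just adds a nonnegative term and then applies the $i$-elliptic bound.) By case~\ref{pfried240-1} the graph $\widehat A$ associated with $(\gota,\widehat\jmath)$ in $\widehat K\times\widehat K$ is self-adjoint and lower bounded. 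The remaining point is to identify the graph $A$ associated with $(\gota,j)$ inside $K\times K$ with a ``compression'' of $\widehat A$, or more directly to check self-adjointness of $A$ by hand: the defining condition $\gota(u,v)=(f,j(v))_K$ for all $v\in V$ is precisely the condition for $(\,\widehat\jmath(u),(f,0)\,)\in\widehat A$ when $f$ is taken in the first factor, and one then transports self-adjointness. I expect this identification to require a short argument that $\ker \widehat\jmath = \ker j \cap \ker i$ behaves correctly; this is the one genuinely fiddly point.

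The hard part is therefore the bookkeeping in case~\ref{pfried240-2}: making the passage from $i$-ellipticity to a genuine $j$-elliptic (Lions-type) situation rigorous, including the verification that the associated graphs really do coincide after the enlargement of the target Hilbert space, rather than merely that both are self-adjoint. The final clause about compact resolvent is comparatively soft. If $j$ is compact, then the resolvent $(A+\omega)^{-1}$ (which exists as a bounded operator on $K$ by lower boundedness) factors through the form domain $V$ in a way that makes it the composition of the bounded map $K\to V$ sending $f$ to the solution $u$ of $\gotb(u,\cdot)=(f,j(\cdot))_K$, followed by $j\colon V\to K$; compactness of $j$ then forces $(A+\omega)^{-1}$ to be compact, hence $A$ has discrete spectrum. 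I would spell this factorization out using coercivity of $\gotb$ to get the bounded solution map $f\mapsto u$, and then note compactness is inherited from $j$.
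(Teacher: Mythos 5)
Your case (a) is essentially the paper's own argument: the $j$-elliptic case is the classical Lions--Kato situation, and the paper disposes of it by citing \cite{AE2} Theorem~2.1 applied on $\overline{j(V)}$; that part, together with your compact-resolvent factorization in this coercive setting, is fine.

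The genuine gap is case (b). Your enlargement $\widehat K = K \times H$, $\widehat\jmath = (j,i)$ does make $\gota$ $\widehat\jmath$-elliptic, and it is true that for $u \in V$, $f \in K$ one has $\gota(u,v) = (f,j(v))_K$ for all $v \in V$ if and only if $\gota(u,v) = ((f,0),\widehat\jmath(v))_{\widehat K}$ for all $v \in V$. But the step you postpone as bookkeeping --- transporting self-adjointness from the graph $\widehat A$ associated with $(\gota,\widehat\jmath)$ down to the graph $A$ associated with $(\gota,j)$ --- is the entire content of the proposition, and it cannot be a soft compression argument, because your reasoning never uses the compactness of $i$, whereas the conclusion fails without it. Concretely, take $V = K = \ell_2$, $\gota = 0$, $H = V$ and $i = \mathrm{id}_V$ (so $\gota$ is $i$-elliptic but $i$ is not compact), and let $j \colon V \to K$ be injective and compact with dense, non-closed range (a diagonal operator with entries $1/n$). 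Then $\widehat\jmath(V) = \{ (j(u),u) : u \in V \}$ is closed in $K \times H$, and $\widehat A = \widehat\jmath(V) \times \widehat\jmath(V)^\perp$ is a self-adjoint graph (consistent with your case (a)), while $A = j(V) \times \{0\}$ has adjoint $A^* = K \times \{0\} \neq A$, so $A$ is symmetric but not self-adjoint. Hence no restriction/compression of $\widehat A$ can prove (b); compactness of $i$ must enter the self-adjointness proof itself. This is precisely the ``hidden compactness'' phenomenon, and the paper proves (b) by quoting \cite{AEKS} Theorems~4.5 and~4.15. Note also that your compact-resolvent argument relies on coercivity of $\gota + \omega\,(j(\cdot),j(\cdot))_K$ on $V$, which is available only under (a); under (b) the paper again quotes \cite{AEKS} (Proposition~4.8) for this statement.
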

\begin{proof}
`\ref{pfried240-1}'.
This follows from \cite{AE2} Theorem~2.1 applied to the Hilbert space~$\overline{j(V)}$.

`\ref{pfried240-2}'.
See \cite{AEKS} Theorems~4.5 and 4.15.

For the last statement, see \cite{AEKS} Proposition~4.8.
\end{proof}

Now we are able to formulate the abstract eigenvalue theorem.
We denote by $A^\circ$ the single valued part of a self-adjoint graph $A$.
For an immediate example we refer the reader to the beginning of the 
proof of Theorem~\ref{tfried202}.

\begin{thm} \label{tstokes501}
Let $V$, $H$ and $K$ be Hilbert spaces.
Suppose that $V$ is embedded in $H$ and that the inclusion map $i \colon V \to H$ is compact.
Let $j \colon V \to K$ be a compact linear map.
Let $\gota \colon V \times V \to \Ci$ be a positive symmetric continuous $i$-elliptic 
sesquilinear form.
Let $V_D = \ker j$.
Let $A^N$ be the self-adjoint operator in $\overline V \subset H$ 
associated with $\gota$ and let 
$A^D$ be the self-adjoint operator in $\overline{V_D} \subset H$ 
associated with $\gota|_{V_D \times V_D}$, where the 
closures are in $H$.
Further, for all $\lambda \in \Ri$ define $\gotb_\lambda \colon V \times V \to \Ci$
by 
\[
\gotb_\lambda(u,v) 
= \gota(u,v) - \lambda \, (u,v)_H
.  \]
Let $\cn_\lambda$ be the self-adjoint graph associated with $(\gotb_\lambda,j)$.
Suppose 
\begin{tabelR}
\item \label{tstokes501-1}
the operator $A^N$ has no eigenvector in $V_D$ and 
\item \label{tstokes501-2}
$\displaystyle 
\dim \spann \{ \varphi \in D(\cn_\lambda) : (\cn_\lambda^\circ \varphi,\varphi)_K = 0 \} 
= \infty
$
for all $\lambda \in (0,\infty)$.
\end{tabelR}
Let $\lambda_1^N \leq \lambda_2^N \leq \ldots$ and 
$\lambda_1^D \leq \lambda_2^D \leq \ldots$ be the eigenvalues of $A^N$ and $A^D$
repeated with multiplicity, respectively.
Then 
\[
\lambda_{n+1}^N < \lambda_n^D
\]
for all $n \in \Ni$.
\end{thm}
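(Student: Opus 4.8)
The plan is to follow Filonov's variational argument, adapted to the abstract form setting. The key object is, for a fixed $\lambda > 0$, the test space
\[
W_\lambda = \spann\{\varphi \in D(\cn_\lambda) : (\cn_\lambda^\circ \varphi,\varphi)_K = 0\},
\]
which by hypothesis \ref{tstokes501-2} is infinite dimensional. Note that every $u = j(\varphi)$ with $\varphi \in D(\cn_\lambda)$ arises from a vector in $V$ (abusively also called $\varphi$) satisfying $\gotb_\lambda(\varphi,v) = (\cn_\lambda^\circ\varphi, j(v))_K$ for all $v \in V$; the condition $(\cn_\lambda^\circ\varphi,\varphi)_K = 0$ then gives $\gotb_\lambda(\varphi,\varphi) = 0$, i.e. $\gota(\varphi,\varphi) = \lambda\,\|\varphi\|_H^2$. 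So elements of $W_\lambda$ (lifted to $V$) are vectors on which the Rayleigh quotient of $\gota$ equals exactly $\lambda$. Crucially these vectors are \emph{not} required to lie in $V_D = \ker j$.

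First I would fix $n \in \Ni$ and set $\lambda = \lambda_n^D$; the goal is to exhibit an $(n+1)$-dimensional subspace $L \subset \overline V$ on which the Rayleigh quotient $\gota(u,u)/\|u\|_H^2$ is strictly less than $\lambda_n^D$ (except possibly on a controlled part), so that the min-max characterisation of $\lambda_{n+1}^N$ forces $\lambda_{n+1}^N < \lambda_n^D$. Let $E$ be the $n$-dimensional spectral subspace of $A^D$ for the first $n$ eigenvalues, viewed inside $V_D \subset V$. On $E$ one has $\gota(u,u) \leq \lambda_n^D\,\|u\|_H^2$, with equality only on the $\lambda_n^D$-eigenspace of $A^D$. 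Pick any nonzero $\psi$ in the lift of $W_\lambda$ to $V$, so $\gota(\psi,\psi) = \lambda_n^D\,\|\psi\|_H^2$, and consider $L = E + \Ci\psi$. Using that $W_\lambda$ is infinite dimensional, one can choose $\psi$ outside $E$ (indeed outside any fixed finite-dimensional space), so $\dim L = n+1$. The point of hypothesis \ref{tstokes501-1} — that $A^N$ has no eigenvector in $V_D$ — is to guarantee strictness: on $L$ the quotient is $\leq \lambda_n^D$, and if it equalled $\lambda_n^D$ on some unit vector $u \in L$ then $u$ would be an eigenvector of $A^N$ for $\lambda_n^D$, and a cross-term computation (expanding $\gotb_{\lambda_n^D}(au_0+b\psi, au_0+b\psi)$ for $u_0$ in the $\lambda_n^D$-eigenspace of $A^D \subset V_D$) would show $u$ has a nonzero component in $V_D$, or else force $\psi \in V_D$; either way one contradicts \ref{tstokes501-1} together with the fact that $A^D$-eigenvectors in $V_D$ cannot be $A^N$-eigenvectors. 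Hence $\gota(u,u) < \lambda_n^D\,\|u\|_H^2$ strictly on $L \setminus \{0\}$, and min-max gives $\lambda_{n+1}^N < \lambda_n^D$.

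The main obstacle I anticipate is making the cross-term / strictness argument fully rigorous when $E$ contains several dimensions of $\lambda_n^D$-eigenspace and when $\psi$ may itself have a component in $V_D$: one must argue that the orthogonality built into $D(\cn_\lambda)$ (i.e. $\gotb_\lambda(\psi, v) = 0$ for $v \in V_D$, which holds because $\cn_\lambda^\circ\psi \in \overline{j(V)}$ is $K$-orthogonal to $j(V_D) = 0$) decouples the $E$-part from the $\psi$-part in the quadratic form $\gotb_{\lambda_n^D}$, so that vanishing of $\gotb_{\lambda_n^D}$ on a combination forces vanishing on each piece separately. A secondary technical point is the passage between vectors $\varphi \in V$ and their images $j(\varphi) \in K$ in the definition of $W_\lambda$, and ensuring the lifted family remains infinite dimensional in $V$ modulo $V_D$; here compactness of $i$ (hence discreteness of both spectra, via Proposition~\ref{pfried240}) and of $j$ are used to keep all the min-max quantities finite and attained. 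Once these points are handled, the inequality for all $n$ follows since $n$ was arbitrary.
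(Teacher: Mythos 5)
Your test-space idea gives the non-strict inequality correctly: for $u=v+c\psi$ with $v\in E\subset V_D$ the cross terms vanish because $\gotb_{\lambda}(\psi,w)=(f,j(w))_K=0$ for all $w\in V_D$, and $\gotb_\lambda(\psi,\psi)=(\cn_\lambda^\circ j(\psi),j(\psi))_K=0$, so $\gota(u,u)\leq\lambda_n^D\|u\|_H^2$ on the $(n+1)$-dimensional space $L=E+\Ci\psi$ and min--max yields $\lambda_{n+1}^N\leq\lambda_n^D$. The strictness part, however, has a genuine gap. First, $L$ contains an $A^D$-eigenvector for $\lambda_n^D$ itself, on which the Rayleigh quotient equals $\lambda_n^D$ exactly; so the maximum of the quotient over $L$ is $\lambda_n^D$, min--max with this $L$ can never give a strict inequality, and a unit vector of $L$ realising the value $\lambda_n^D$ is not thereby an eigenvector of $A^N$ (to get an actual eigenvector one must assume $\lambda_{n+1}^N\geq\lambda_n^D$, pick $0\neq u\in L$ that is $H$-orthogonal to the at most $n$-dimensional span of the $A^N$-eigenvectors with eigenvalue $<\lambda_n^D$, and exploit the resulting equality $\gota(u,u)=\lambda_n^D\|u\|_H^2$). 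Second, and more seriously, the equality case is \emph{not} contradictory with hypothesis~\ref{tstokes501-1} alone: it produces $A^Nu=\lambda_n^D u$ with $u=v+c\psi$, and if $c\neq0$ then $j(u)=c\,j(\psi)\neq0$, so $u\notin V_D$ and \ref{tstokes501-1} is not violated; your ``either way one contradicts (I)'' fails. This case really occurs: on the interval $(0,\pi)$ (where the analogue of \ref{tstokes501-1} holds and a nonzero $\psi$ with your properties exists, but \ref{tstokes501-2} fails) one has $\cos nx=e^{inx}-i\sin nx\in L$, a Neumann eigenfunction with eigenvalue $n^2=\lambda_n^D$, and indeed $\lambda_{n+1}^N=\lambda_n^D$ there. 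Since your argument only uses one element of the set in \ref{tstokes501-2} (i.e.\ only that its span is nonzero), no cross-term bookkeeping can close the gap: with the information you actually use, the strict inequality is false.

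The missing idea is to use the infinite dimensionality in \ref{tstokes501-2} in earnest: since $A^N$ has discrete spectrum ($i$ is compact), the space $Z=E+\ker(A^N-\lambda_n^D)$ is finite dimensional, so one can choose $\psi$ with $j(\psi)\notin j(Z)$, hence $\psi\notin Z$; then in the equality case $c\neq0$ would force $\psi=(u-v)/c\in Z$, a contradiction, while $c=0$ gives an $A^N$-eigenvector $u=v\in V_D$, contradicting \ref{tstokes501-1}. With that choice and the orthogonality/dimension-count step above, your Filonov-type route does prove the theorem, and it is genuinely different from the paper's argument: the paper perturbs the form to $\gota_\mu=\gota-\mu\,(j(\cdot),j(\cdot))_K$, shows $\lambda_n(\mu)\to\lambda_n^D$ as $\mu\to-\infty$, proves strict monotonicity of $\mu\mapsto\lambda_n(\mu)$ via a Birman--Schwinger correspondence between $\sigma(A_\mu)$ and $\sigma(\cn_\lambda)$ (where \ref{tstokes501-1} enters), and uses \ref{tstokes501-2} to produce a negative element of $\sigma(\cn_{\lambda_n^D})$, from which the strict inequality follows.
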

\begin{proof}
Since $\gota$ is $i$-elliptic there are $\omega,\delta > 0$ such that 
\begin{equation}
\gota(u,u) + \omega \, \|u\|_H^2 \geq \delta \, \|u\|_V^2
\label{etstokes501;1}
\end{equation}
for all $u \in V$.
Let $\widehat A^D$ be the self-adjoint graph in $H$ associated with 
$(\gota|_{V_D \times V_D}, i|_{V_D})$.
Then $A^D = (\widehat A^D)^\circ$, the singular part of $\widehat A^D$
and $\widehat A^D$ is positive.
Moreover, $\widehat A^D$ has compact resolvent since $i$ is compact.

For all $\mu \in \Ri$ define the form $\gota_\mu \colon V \times V \to \Ci$ by
\[
\gota_\mu(u,v) = \gota(u,v) - \mu \, (j(u), j(v))_K
.  \]
Then $\gota_\mu$ is $i$-elliptic.
Let $A_\mu$ be the self-adjoint graph associated with $(\gota_\mu,i)$.
Then $A_\mu$ has compact resolvent by Proposition~\ref{pfried240}.
Let $\lambda_1(\mu) \leq \lambda_2(\mu) \leq \ldots$ be the eigenvalues
of $A_\mu$, repeated with multiplicity.
Then $\lambda_n \colon \Ri \to \Ri$ is a function for all $n \in \Ni$.
Since $\mu \mapsto \gota_\mu(u)$ is a decreasing function for all $u \in V$,
it follows from the mini-max theorem
that $\lambda_n$ is a decreasing function for all $n \in \Ni$.

\begin{lemma} \label{lstokes502}
$\lim_{\mu \to - \infty} (I + A_\mu)^{-1} = (I + \widehat A^D)^{-1}$ in $\cl(H)$.
\end{lemma}
\begin{proof}
Let $\mu_1,\mu_2,\ldots \in (-\infty,0)$ and suppose that 
$\lim_{n \to \infty} \mu_n = - \infty$.
We shall prove that $\lim_{n \to \infty} \|(I + A_{\mu_n})^{-1} - (I + \widehat A^D)^{-1}\|_{\cl(H)} = 0$.
Suppose not. 
Then there are $\varepsilon > 0$ and $f_1,f_2,\ldots \in H$ such that 
\begin{equation}
\|(I + A_{\mu_n})^{-1} f_n - (I + \widehat A^D)^{-1} f_n\|_H 
> \varepsilon \, \|f_n\|_H
\label{elstokes502;2}
\end{equation}
for all $n \in \Ni$.
Without loss of generality we may assume that $\|f_n\|_H = 1$ for all $n \in \Ni$.
Passing to a subsequence if necessary, there exists an $f \in H$ 
such that $\lim f_n = f$ weakly in $H$.
Then 
$\lim (I + \widehat A^D)^{-1} f_n = (I + \widehat A^D)^{-1} f$ in $H$
since $(I + \widehat A^D)^{-1}$ is compact.
For all $n \in \Ni$ set $u_n = (I + A_{\mu_n})^{-1} f_n$.

Let $n \in \Ni$.
Then 
\begin{equation}
\gota(u_n,v) - \mu_n \, (j(u_n), j(v))_K + (u_n, v)_H
= (f_n, v)_H
\label{elstokes502;1}
\end{equation}
for all $v \in V$.
Choosing $v = u_n$ gives
\[
\gota(u_n, u_n) + |\mu_n| \, \|j(u_n)\|_K^2 + \|u_n\|_H^2
= \RRe (f_n, u_n)_H
\leq \|f_n\|_H \, \|u_n\|_H
= \|u_n\|_H
.  \]
So $\|u_n\|_H \leq 1$ and then also 
$\gota(u_n, u_n) \leq 1$ and $|\mu_n| \, \|j(u_n)\|_K^2 \leq 1$.
Hence $\lim j(u_n) = 0$ in~$K$.
Moreover, $\delta \, \|u_n\|_V^2 \leq (1 + \omega)$
for all $n \in \Ni$ by (\ref{etstokes501;1}).
Therefore the sequence $(u_n)_{n \in \Ni}$ is bounded in $V$.
Passing to a subsequence if necessary, there exists a $u \in V$ 
such that $\lim u_n = u$ weakly in $V$.
Then $\lim u_n = u$ strongly in $H$ since the embedding $i$ is compact.
Similarly $j(u) = \lim j(u_n)$ in $K$.
But $\lim j(u_n) = 0$ in $K$.
So $j(u) = 0$ and $u \in V_D$.

Let $v \in V_D$.
Taking the limit $n \to \infty$ in (\ref{elstokes502;1}) gives
$\gota(u,v) + (u,v)_H = (f,v)_H$.
So $u \in D(\widehat A^D)$ and $u = (I + \widehat A^D)^{-1} f$.
We proved that $\lim u_n = u$ in $H$.
Hence 
\[
\lim_{n \to \infty}
(I + A_{\mu_n})^{-1} f_n - (I + \widehat A^D)^{-1} f_n
= (I + \widehat A^D)^{-1} f - (I + \widehat A^D)^{-1} f 
= 0
\]
in $H$.
This contradicts (\ref{elstokes502;2}).
\end{proof}

\begin{prop} \label{pstokes503}
If $n \in \Ni$, then $\lim_{\mu \to - \infty} \lambda_n(\mu) = \lambda_n^D$.
\end{prop}
\begin{proof}
Let $n \in \Ni$.
Let $\varepsilon > 0$.
By Lemma~\ref{lstokes502} there exists an $M \in \Ni$ such that 
\[
( (I + \widehat A^D)^{-1} f,f)_H - \varepsilon \, \|f\|_H^2
\leq ( (I + A_\mu)^{-1} f,f)_H
\leq ( (I + \widehat A^D)^{-1} f,f)_H + \varepsilon \, \|f\|_H^2
\]
for all $f \in H$ and $\mu \in (-\infty,-M]$.
Note that the max-min theorem gives
\[
( 1 + \lambda_n(\mu))^{-1}
= \max_{ \scriptstyle W \subset H \atop
         \scriptstyle \dim W = n}
  \min_{ \scriptstyle f \in W  \atop
         \scriptstyle \|f\|_H = 1}
   ( (I + A_\mu)^{-1} f,f)_H
\]
and a similar expression is valid for $\widehat A^D$.
So 
$( 1 + \lambda_n^D)^{-1} - \varepsilon 
\leq ( 1 + \lambda_n(\mu))^{-1} 
\leq ( 1 + \lambda_n^D)^{-1} + \varepsilon$
for all $\mu \in (-\infty,-M]$ and the proposition follows.
\end{proof}

The next lemma is a version of the Birman--Schwinger principle.

\begin{lemma} \label{lstokes504}
Let $\lambda,\mu \in \Ri$.
Then $\lambda \in \sigma(A_\mu)$ if and only if $\mu \in \sigma(\cn_\lambda)$.
\end{lemma}
\begin{proof}
`$\Rightarrow$'.
Suppose $\lambda \in \sigma(A_\mu)$.
Then there exists a $u \in D(A_\mu)$ such that $u \neq 0$ and 
$A_\mu u = \lambda \, u$.
Then 
\begin{equation}
\gota(u,v) - \mu \, (j(u), j(v))_K = (\lambda \, u, v)_H
\label{elstokes504;1}
\end{equation}
for all $v \in V$.
Hence $\gotb_\lambda(u,v) = \gota(u,v) - \lambda \, (u, v)_H = ( \mu \, j(u), j(v))_K$
for all $v \in V$.
Therefore $j(u) \in D(\cn_\lambda)$ and $(j(u), \mu \, j(u)) \in \cn_\lambda$.
It remains to show that $j(u) \neq 0$.
Suppose that $j(u) = 0$.
Then it follows from (\ref{elstokes504;1}) that $\gota(u,v) = (\lambda \, u, v)_H$
for all $v \in V$.
So $u \in D(A^N)$ and $A^N u = \lambda \, u$.
Hence $u$ is an eigenvector for $A^N$ and also $u \in V_D$.
By Assumption~\ref{tstokes501-1} this is not possible.
So $j(u) \neq 0$ and $\mu \in \sigma(\cn_\lambda)$.

`$\Leftarrow$'.
The proof is similar, even easier.
\end{proof}

\begin{lemma} \label{lstokes505}
Let $n \in \Ni$.
Then $\lambda_n$ is strictly decreasing.
\end{lemma}
\begin{proof}
We already know that $\lambda_n$ is decreasing.
Let $\mu_1,\mu_2 \in \Ri$ with $\mu_1 < \mu_2$.
Suppose that $\lambda_n(\mu_1) = \lambda_n(\mu_2)$.
Write $\lambda = \lambda_n(\mu_1)$.
Let $\mu \in [\mu_1,\mu_2]$.
Since $\lambda_n$ is decreasing, it follows that $\lambda_n(\mu) = \lambda$.
So $\lambda \in \sigma(A_\mu)$.
Hence $\mu \in \sigma(\cn_\lambda)$ by Lemma~\ref{lstokes504}.
So $[\mu_1,\mu_2] \subset \sigma(\cn_\lambda)$.
This is a contradiction since $\cn_\lambda$ has a discrete spectrum
by Proposition~\ref{pfried240}.
\end{proof}

\begin{lemma} \label{lstokes506}
Let $\lambda > 0$. 
Then $\sigma(\cn_\lambda) \cap (-\infty,0) \neq \emptyset$.
\end{lemma}
\begin{proof}
Let $\cn_\lambda^\circ$ be the single valued part of $\cn_\lambda$.
Suppose $\sigma(\cn_\lambda) \cap (-\infty,0) = \emptyset$.
Then $\cn_\lambda^\circ$ is a positive self-adjoint operator.

Let $\varphi \in D(\cn_\lambda)$ and suppose that 
$(\cn_\lambda^\circ \varphi,\varphi)_K = 0$.
Let $\psi \in D(\cn_\lambda^\circ)$.
Then 
\[
|(\cn_\lambda^\circ \varphi, \psi)_K|
\leq \|(\cn_\lambda^\circ)^{1/2} \varphi\|_K \, \|(\cn_\lambda^\circ)^{1/2} \psi\|_K
= (\cn_\lambda^\circ \varphi,\varphi)_K^{1/2} \, \|(\cn_\lambda^\circ)^{1/2} \psi\|_K
= 0
.  \]
Hence $\cn_\lambda^\circ \varphi = 0$ and $\varphi \in \ker(\cn_\lambda^\circ)$.
By Assumption~\ref{tstokes501-2} 
\[
\dim \spann \{ \varphi \in D(\cn_\lambda) : (\cn_\lambda^\circ \varphi,\varphi)_K = 0 \} 
= \infty
.  \]
So $\dim \ker(\cn_\lambda^\circ) = \infty$.
But the operator $\cn_\lambda^\circ$ has compact resolvent by Proposition~\ref{pfried240}.
This is a contradiction.
\end{proof}

Now we complete the proof of Theorem~\ref{tstokes501}.
Let $n \in \Ni$.
Choose $\lambda = \lambda_n^D$.
Since $\lambda_n$ is strictly decreasing by Lemma~\ref{lstokes505}, it follows from 
Proposition~\ref{pstokes503} that 
\[
0 \leq \lambda_n^N = \lambda_n(0) < \lambda_n^D = \lambda
.  \]
Let $\mu = \min \sigma(\cn_\lambda)$.
Then $\mu < 0$ by Lemma~\ref{lstokes506}.
Moreover, $\lambda \in \sigma(A_\mu)$ by Lemma~\ref{lstokes504}.
Hence there exists a $j \in \Ni$ such that $\lambda_j(\mu) = \lambda$.
Using again Lemma~\ref{lstokes505} and Proposition~\ref{pstokes503}
it follows that $\lambda_n(\mu) < \lambda_n^D = \lambda = \lambda_j(\mu)$.
Therefore $j \geq n+1$.
Then 
\[
\lambda_{n+1}^N 
= \lambda_{n+1}(0) 
< \lambda_{n+1}(\mu) 
\leq \lambda_j(\mu) 
= \lambda 
= \lambda_n^D
,  \]
where the strict inequality follows from Lemma~\ref{lstokes505}.
\end{proof}

The first main example is the classical version of Friedlander's theorem.

\begin{thm} \label{tfried202}
Let $\Omega \subset \Ri^d$ be a bounded open set with Lipschitz boundary
and $d \geq 2$.
Let $\Delta^N$ and $\Delta^D$ be the Neumann and Dirichlet Laplacians on $\Omega$,
respectively.
Further let $\lambda_1^N \leq \lambda_2^N \leq \ldots$ and 
$\lambda_1^D \leq \lambda_2^D \leq \ldots$ be the eigenvalues of $-\Delta^N$ and $-\Delta^D$
repeated with multiplicity, respectively.
Then 
\[
\lambda_{n+1}^N < \lambda_n^D
\]
for all $n \in \Ni$.
\end{thm}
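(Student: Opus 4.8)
The plan is to deduce Theorem~\ref{tfried202} from the abstract comparison theorem, Theorem~\ref{tstokes501}, by making the ``standard'' choices of Hilbert spaces and form attached to a Lipschitz domain. First I would set $V = H^1(\Omega)$, $H = L_2(\Omega)$ with $i \colon V \to H$ the inclusion (compact by Rellich--Kondrachov, using that $\Omega$ is bounded with Lipschitz boundary), $K = L_2(\partial\Omega)$ with $j \colon H^1(\Omega) \to L_2(\partial\Omega)$ the trace operator (compact, again by the Lipschitz hypothesis and standard trace theorems), and $\gota(u,v) = \int_\Omega \nabla u \cdot \overline{\nabla v}$. This form is positive, symmetric, continuous, and $i$-elliptic since $\gota(u,u) + \|u\|_H^2 = \|u\|_{H^1}^2$. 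With these choices $V_D = \ker j = H^1_0(\Omega)$, the operator $A^N$ associated with $\gota$ on $\overline{V} = L_2(\Omega)$ is $-\Delta^N$, and $A^D$ associated with $\gota|_{V_D \times V_D}$ on $\overline{V_D} = L_2(\Omega)$ is $-\Delta^D$; I would record these identifications explicitly (noting also that here both graphs are already single-valued, so $A^\circ = A$ — this is the ``immediate example'' promised in the text). The eigenvalue conclusion $\lambda_{n+1}^N < \lambda_n^D$ is then exactly the conclusion of Theorem~\ref{tstokes501}, so what remains is to verify the two hypotheses \ref{tstokes501-1} and \ref{tstokes501-2}.

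For hypothesis \ref{tstokes501-1}, I must show $-\Delta^N$ has no eigenvector lying in $V_D = H^1_0(\Omega)$. If $u \in H^1_0(\Omega)$ were a Neumann eigenfunction, $\gota(u,v) = \lambda(u,v)_H$ for all $v \in H^1(\Omega)$; in particular for $v$ ranging over $H^1_0(\Omega)$ this says $-\Delta u = \lambda u$ in the distributional sense, while testing against all of $H^1(\Omega)$ forces the normal derivative to vanish as well, so $u$ is a nontrivial solution of the Laplace eigenvalue problem with both Dirichlet and Neumann data zero on $\partial\Omega$. Extending $u$ by zero to a slightly larger ball and invoking unique continuation (or the fact that such a $u$ extended by zero is a global eigenfunction of $-\Delta$ on $\Ri^d$, hence real-analytic, hence $\equiv 0$) gives $u = 0$, a contradiction. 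I expect this step to be short but to require citing a unique-continuation result; connectedness of $\Omega$, assumed in the paper, is what makes it clean.

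Hypothesis \ref{tstokes501-2} is the one I expect to be the main obstacle, and it is the genuine content of the Friedlander/Filonov argument. Here $\gotb_\lambda(u,v) = \int_\Omega \nabla u \cdot \overline{\nabla v} - \lambda \int_\Omega u \overline{v}$ and $\cn_\lambda$ is the Dirichlet-to-Neumann graph at frequency $\lambda$: the condition $(\cn_\lambda^\circ \varphi, \varphi)_K = 0$ picks out boundary traces $\varphi = j(u)$ of functions $u \in H^1(\Omega)$ with $-\Delta u = \lambda u$ in $\Omega$ for which $\int_{\partial\Omega} (\partial_\nu u)\overline{\varphi} = 0$. To produce an infinite-dimensional space of such $\varphi$, the classical trick (Filonov) is to use, for $\lambda > 0$, plane waves or more generally solutions of $-\Delta u = \lambda u$ on all of $\Ri^d$ restricted to $\Omega$: for any such $u$, Green's identity gives $\int_{\partial\Omega}(\partial_\nu u)\overline{j(v)} = \gota(u,v) - \lambda(u,v)_H = \gotb_\lambda(u,v)$ for all $v \in H^1(\Omega)$, and choosing $v = u$ shows the corresponding boundary value $\varphi = j(u)$ satisfies $(\cn_\lambda^\circ\varphi, \varphi)_K = \gotb_\lambda(u,u)$... so in fact one wants $u$ with $\gotb_\lambda(u,u) = 0$, which holds exactly for global $\lambda$-eigenfunctions of $-\Delta$ on $\Ri^d$. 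The space of restrictions to $\Omega$ of such global eigenfunctions (e.g. the span of $\{x \mapsto e^{i\xi\cdot x} : |\xi|^2 = \lambda\}$) is infinite-dimensional, and its image under the trace map $j$ remains infinite-dimensional because a finite linear combination whose trace vanishes on the Lipschitz boundary would, by the hypothesis \ref{tstokes501-1}-type unique-continuation argument, have to vanish identically. I would assemble these observations into the dimension count, taking care that membership in $D(\cn_\lambda)$ only requires $u \in H^1(\Omega)$ (automatic for smooth global solutions on a bounded domain) and the weak identity above; the only subtlety is ensuring the traces are genuinely independent, which is again where connectedness and unique continuation enter.
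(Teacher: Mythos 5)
Your route is the same as the paper's (same $V=H^1(\Omega)$, $H=L_2(\Omega)$, $K=L_2(\partial\Omega)$, same form $\gota$, condition \ref{tstokes501-1} via extension by zero plus unique continuation, condition \ref{tstokes501-2} via plane waves), but your verification of \ref{tstokes501-2} contains two incorrect steps. First, it is not true that $\gotb_\lambda(u,u)=0$ ``holds exactly for global $\lambda$-eigenfunctions of $-\Delta$ on $\Ri^d$'': for the restriction of such an eigenfunction one only gets $\gotb_\lambda(u,u)=\int_{\partial\Omega}(\partial_\nu u)\,\overline{u}$, which is nonzero in general (take $u(x)=\cos x_1$, $\lambda=1$, $\Omega=(0,\pi/4)^2$; then $\gotb_\lambda(u,u)=-\pi/8$). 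What saves the argument is the special structure of the plane waves $u(x)=e^{i\omega\cdot x}$, $|\omega|^2=\lambda$: there $|u|\equiv 1$ and $|\nabla u|^2\equiv\lambda$, so $\gotb_\lambda(u,u)=(|\omega|^2-\lambda)|\Omega|=0$ (equivalently, as in the paper, $(\partial_\nu u)\overline{u}=i\,\omega\cdot\nu$ integrates to zero by the divergence theorem). So you must take the individual plane-wave traces as your generators; this is enough, since only the \emph{span} of the set $\{\varphi\in D(\cn_\lambda):(\cn_\lambda^\circ\varphi,\varphi)_K=0\}$ needs to be infinite-dimensional. (You should also justify, as the paper does via the divergence theorem for smooth $v$ and density in $H^1(\Omega)$, that the boundary pairing identity holds for all $v\in H^1(\Omega)$, so that $\Tr u\in D(\cn_\lambda)$.)

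The more serious gap is your independence argument: a nontrivial combination $w=\sum_k c_k e^{i\omega_k\cdot x}$ with $\Tr w=0$ need \emph{not} vanish, and the \ref{tstokes501-1}-type unique-continuation argument does not apply to it, because zero trace only puts $w|_\Omega$ in $H^1_0(\Omega)$; the extension by zero is then not a weak eigenfunction on $\Ri^d$ unless the normal derivative also vanishes. Concretely, on $\Omega=(0,\pi)^2$ with $\lambda=2$ the four plane waves $e^{i(\pm x_1\pm x_2)}$ have linearly dependent traces, since $\sin x_1\sin x_2$ vanishes on $\partial\Omega$; and you cannot sidestep this by assuming $\lambda\notin\sigma(-\Delta^D)$, because in the proof of Theorem~\ref{tstokes501} condition \ref{tstokes501-2} is used precisely at $\lambda=\lambda_n^D$. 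The conclusion is still true, but for a different reason: the restrictions to $\Omega$ of distinct plane waves are linearly independent (they are analytic, so a combination vanishing on the open set $\Omega$ vanishes identically), and the kernel of $\Tr$ on their span consists of $H^1_0(\Omega)$ functions $w$ with $-\Delta w=\lambda w$, i.e.\ of Dirichlet eigenfunctions for the eigenvalue $\lambda$, a finite-dimensional space since $-\Delta^D$ has compact resolvent. Hence the trace image of the (infinite-dimensional) plane-wave span is infinite-dimensional, which gives \ref{tstokes501-2}. With that repair your argument matches the paper's proof.
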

\begin{proof}
Choose $V = H^1(\Omega)$, $H = L_2(\Omega)$ and $K = L_2(\partial \Omega)$.
Let $i \colon V \to H$ be the inclusion map and $j = \Tr \colon V \to K$. 
Then $i$ and $j$ are compact.
Moreover, $\ker j = H^1_0(\Omega)$.
Define $\gota \colon V \times V \to \Ci$ by 
\[
\gota(u,v) 
= \int_\Omega \nabla u \cdot \overline{ \nabla v }
.  \]
Then $\gota$ is a positive symmetric continuous $i$-elliptic sesquilinear form.
Further, $- \Delta^N$ is the self-adjoint operator associated with $\gota$
and $- \Delta^D$ is the self-adjoint operator associated with 
$\gota|_{H^1_0(\Omega) \times H^1_0(\Omega)} = \gota|_{\ker j \times \ker j}$.

It remains to verify Conditions~\ref{tstokes501-1} and \ref{tstokes501-2}
of Theorem~\ref{tstokes501}.

`\ref{tstokes501-1}'.
Let $u \in D(A^N) \cap V_D$ and $\lambda \in \Ri$.
Suppose that $A^N u = \lambda \, u$.
Then $u \in H^1_0(\Omega)$.
Let $\tilde u \in H^1(\Ri^d)$ be the extension of $u$ by zero.
Then 
\[
\int_{\Ri^d} \nabla \tilde u \cdot \overline{\nabla v}
= \int_\Omega \nabla u \cdot \overline{\nabla (v|_\Omega)}
= (\lambda \, u, v|_\Omega)_{L_2(\Omega)}
= (\lambda \, \tilde u, v)_{L_2(\Ri^d)}
\]
for all $v \in H^1(\Ri^d)$.
So $- \widetilde \Delta^N \tilde u = \lambda \, \tilde u$, where 
$\widetilde \Delta^N$ is the Laplacian on $\Ri^d$.
Since $\tilde u$ vanishes on a non-empty open set, it follows from the 
unique continuation property that $\tilde u = 0$.
(See for example \cite{RS4} Theorem~XIII.57.)
Then also $u = 0$.
So $A^N$ has no eigenvector in~$V_D$.

`\ref{tstokes501-2}'.
Let $\lambda \in (0,\infty)$.
Let $\omega \in \Ri^d$ and suppose that $|\omega|^2 = \lambda$.
Define $u \in H^2(\Omega)$ by $u(x) = e^{i \omega \cdot x}$.
Then $- \Delta u = \lambda u$ as distribution and 
\begin{eqnarray*}
\gotb_\lambda(u,v)
& = & \int_\Omega \nabla u \cdot \overline{ \nabla v }
     - \lambda \int_\Omega u \, \overline v  \\
& = & \int_\Omega \nabla u \cdot \overline{ \nabla v }
     + \int_\Omega (\Delta u) \, \overline v  \\
& = & \int_\Omega \divv( \overline v \, \nabla u)  \\
& = & \int_{\partial \Omega} \nu \cdot \Tr(\overline v \, \nabla u)
= \int_{\partial \Omega} i (\omega \cdot \nu) \, (\Tr u) \, \overline{\Tr v}
= (i (\omega \cdot \nu) \, \Tr u, j(v))_K
\end{eqnarray*}
for all $v \in C_b^\infty(\Omega)$, where we used the divergence theorem
(see \cite{AltEnglish} Theorem~A8.8).
So by density (see for example \cite{MazED2} Theorem~1.1.6/2)
one deduces that 
\[
\gotb_\lambda(u,v)
= (i (\omega \cdot \nu) \, \Tr u, j(v))_K
\]
for all $v \in H^1(\Omega)$.
Hence
$\Tr u \in D(\cn_\lambda)$ and $(\Tr u , i (\omega \cdot \nu) \, \Tr u) \in \cn_\lambda$.
Moreover, 
\[
(\cn_\lambda^\circ j(u), j(u))_{L_2(\Omega, \Ci^d)}
= \gotb_\lambda(u,u)
= \int_{\partial \Omega} i (\omega \cdot \nu) \, |\Tr u|^2
= i \int_{\partial \Omega} (\omega \cdot \nu)
= i \int_\Omega \divv \omega
= 0
.  \]
Therefore 
\[
\Tr u
\in \{ \varphi \in D(\cn_\lambda) : (\cn_\lambda^\circ \varphi,\varphi)_{L_2(\partial \Omega)} = 0 \} 
.  \]
It is then obvious that 
\[
\dim \spann \{ \varphi \in D(\cn_\lambda) : (\cn_\lambda^\circ \varphi,\varphi)_{L_2(\partial \Omega)} = 0 \} 
= \infty
.  \]
This completes the proof of the theorem.
\end{proof}

\section{The Stokes operators} \label{Sfried3}

In this section we describe the Dirichlet and the Neumann version 
of the Stokes operator on a Lipschitz domain with the aid of 
sesquilinear forms.
We then apply Theorem~\ref{tstokes501} (abstract eigenvalues comparison theorem) to obtain
a Stokes version of Friedlander's eigenvalues theorem.
For independent interest we present a description of the 
Stokes type Dirichlet-to-Neumann operator.

Let $\Omega \subset \Ri^d$ be a bounded open connected set with Lipschitz boundary.
We assume that $d \geq 2$.
We denote by $\nu$ the outward normalised normal.
Let $H = L_2(\Omega,\Ci^d)$,
\[
V = \{ u \in H^1(\Omega,\Ci^d) : \divv u = 0 \}
\]
and $K = L_2(\partial \Omega, \Ci^d)$.
Here and below $\divv$ is the divergence operator acting on distributions.
Then the inclusion $i \colon V \to H$ is compact.
Let $j = \Tr \colon V \to K$ be the trace operator.
Then also $j$ is compact.
Define the form $\gota \colon V \times V \to \Ci$ by 
\[
\gota(u,v)
= \int_\Omega \nabla u \cdot \overline{ \nabla v}
.  \]
Then $\gota$ is continuous and $i$-elliptic.
Let $A^N$ be the self-adjoint operator in $\overline V \subset H$ 
associated with $\gota$ and let 
$A^D$ be the self-adjoint operator in $\overline{V_D} \subset H$ 
associated with $\gota|_{V_D \times V_D}$, where the 
closures are in $H$ and $V_D = \ker j$.
We first characterise $A^D$ and $A^N$.

Define
\[
C^\infty_{c,\sigma}(\Omega)
= \{ u \in C_c^\infty(\Omega,\Ci^d) : \divv u = 0 \} 
,  \]
the space of {\bf solenoidal test functions},
and let $L_{2,\sigma}(\Omega)$ be the closure of $C^\infty_{c,\sigma}(\Omega)$
in $L_2(\Omega,\Ci^d)$.

\begin{prop} \label{pfried302}
\mbox{}
\begin{tabel}
\item \label{pfried302-1}
The closure of the space $V_D$ in $L_2(\Omega,\Ci^d)$ is $L_{2,\sigma}(\Omega)$.
\item \label{pfried302-3}
$V_D^\perp = L_{2,\sigma}(\Omega)^\perp 
= \{ \nabla \pi : \pi \in H^1(\Omega) \} $,
where the orthogonal complement is in $L_2(\Omega,\Ci^d)$.
\item \label{pfried302-2}
Let $u,f \in L_{2,\sigma}(\Omega)$.
Then $u \in D(A^D)$ and $A^D u = f$ if and only if $u \in H^1_0(\Omega,\Ci^d)$,
$\divv u = 0$ and there exists a $\pi \in L_2(\Omega)$ such that 
$f = - \Delta u + \nabla \pi$ in $H^{-1}(\Omega,\Ci^d)$.
\end{tabel}
\end{prop}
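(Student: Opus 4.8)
The plan is to derive all three statements from two classical facts about a bounded connected open set $\Omega$ with Lipschitz boundary: the Helmholtz--Weyl decomposition $L_2(\Omega,\Ci^d) = L_{2,\sigma}(\Omega) \oplus \{\nabla\pi : \pi \in H^1(\Omega)\}$ as an orthogonal direct sum, and the combination of de Rham's lemma with Ne\v{c}as' inequality, to the effect that a vector-valued distribution $T$ on $\Omega$ that annihilates every element of $C^\infty_{c,\sigma}(\Omega)$ is a gradient $T = \nabla\pi$, and that $\nabla\pi \in H^{-1}(\Omega,\Ci^d)$ forces $\pi \in L_2(\Omega)$ (determined up to an additive constant). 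Both are available in standard references on the Stokes and Navier--Stokes equations, such as the monographs of Galdi and of Sohr, and I would quote them rather than reprove them; connectedness of $\Omega$ enters precisely in Ne\v{c}as' inequality.

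For \ref{pfried302-1} and \ref{pfried302-3}, note first that $V_D = \ker(\Tr|_V) = \{u \in H^1_0(\Omega,\Ci^d) : \divv u = 0\}$. Since $C^\infty_{c,\sigma}(\Omega) \subset V_D$, the closure $\overline{V_D}$ of $V_D$ in $L_2(\Omega,\Ci^d)$ contains $L_{2,\sigma}(\Omega)$. Conversely, for $u \in V_D$ and $\pi \in H^1(\Omega)$, Green's formula gives $(u,\nabla\pi)_{L_2} = -(\divv u,\pi)_{L_2} + \int_{\partial\Omega}(\Tr u\cdot\nu)\,\overline{\Tr\pi} = 0$, because $\divv u = 0$ and $\Tr u = 0$; hence $\overline{V_D} \subset \{\nabla\pi : \pi \in H^1(\Omega)\}^\perp$. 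By the Helmholtz--Weyl decomposition this complement equals $L_{2,\sigma}(\Omega)$, so $\overline{V_D} = L_{2,\sigma}(\Omega)$, which is~\ref{pfried302-1}. Then~\ref{pfried302-3} follows at once: $V_D^\perp = (\overline{V_D})^\perp = L_{2,\sigma}(\Omega)^\perp = \{\nabla\pi : \pi \in H^1(\Omega)\}$, the last equality being again the Helmholtz--Weyl decomposition.

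For \ref{pfried302-2}, recall from~\ref{pfried302-1} that $A^D$ is the self-adjoint operator in $L_{2,\sigma}(\Omega)$ associated with $\gota|_{V_D\times V_D}$, so $u \in D(A^D)$ with $A^D u = f$ if and only if $u \in V_D$, $f \in L_{2,\sigma}(\Omega)$ and $\int_\Omega\nabla u\cdot\overline{\nabla v} = (f,v)_{L_2}$ for all $v \in V_D$. For the implication ``$\Leftarrow$'', given $u \in H^1_0(\Omega,\Ci^d)$ with $\divv u = 0$ and $\pi \in L_2(\Omega)$ with $f = -\Delta u + \nabla\pi$ in $H^{-1}(\Omega,\Ci^d)$, I would test this identity against $v \in V_D$: the term $\langle\nabla\pi,v\rangle = -\int_\Omega\pi\,\overline{\divv v}$ vanishes since $\divv v = 0$, and the embedding $L_2 \hookrightarrow H^{-1}$ identifies the pairing $\langle f,v\rangle$ with $(f,v)_{L_2}$, so the form identity holds, whence $u \in D(A^D)$ and $A^D u = f$ (here $f \in L_{2,\sigma}(\Omega)$ is part of the hypothesis). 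For ``$\Rightarrow$'', the form identity says that $-\Delta u - f \in H^{-1}(\Omega,\Ci^d)$ annihilates $V_D$, in particular $C^\infty_{c,\sigma}(\Omega)$; by de Rham's lemma $-\Delta u - f = -\nabla\pi$ for some distribution $\pi$, and since $\nabla\pi = \Delta u + f \in H^{-1}(\Omega,\Ci^d)$, Ne\v{c}as' inequality gives $\pi \in L_2(\Omega)$; hence $f = -\Delta u + \nabla\pi$ in $H^{-1}(\Omega,\Ci^d)$.

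The integration-by-parts identities and the bookkeeping between the abstract form definition and the partial differential equation are routine. The only genuinely non-elementary ingredient, which I expect to be the main obstacle and which I would cite rather than prove, is the package consisting of the Helmholtz--Weyl decomposition, de Rham's representation of the pressure, and Ne\v{c}as' inequality $\|\pi - \overline\pi\|_{L_2(\Omega)} \le C\,\|\nabla\pi\|_{H^{-1}(\Omega,\Ci^d)}$ on connected bounded Lipschitz domains; this is where the hypotheses on $\Omega$ are used and where the pressure $\pi$ is produced and shown to lie in $L_2(\Omega)$.
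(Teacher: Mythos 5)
Your proposal is correct and takes essentially the same route as the paper: the Helmholtz--Weyl decomposition (Temam) identifies $L_{2,\sigma}(\Omega)^\perp$ with $\{\nabla\pi:\pi\in H^1(\Omega)\}$, the de~Rham/Ne\v{c}as package (in the paper, Sohr Lemma~II.2.1.1) produces the pressure $\pi\in L_2(\Omega)$ from the vanishing of $f+\Delta u$ on divergence-free $H^1_0$ fields, and the remaining translation between the form identity and the PDE is the same bookkeeping. The only cosmetic difference is in part~(a), where the paper simply cites Sohr Lemma~II.2.2.3 for the density of $V_D$ in $L_{2,\sigma}(\Omega)$, whereas you rederive it from the decomposition via Green's formula; both rest on the same standard facts.
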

\begin{proof}
`\ref{pfried302-1}'.
This follows from \cite{Sohr} Lemma~II.2.2.3.

`\ref{pfried302-3}'.
The first equality is obvious.
The second one follows from \cite{Temam} Theorem~1.1.4.

`\ref{pfried302-2}'.
Clearly $V_D = \{ w \in H^1_0(\Omega,\Ci^d) : \divv w = 0 \} $.
`$\Rightarrow$'.
Since $A^D$ is the operator associated with $\gota|_{V_D \times V_D}$
it follows that $u \in V_D$ and $\gota(u,v) = (f,v)_{L_2(\Omega,\Ci^d)}$
for all $v \in V_D$.
So $u \in H^1_0(\Omega,\Ci^d)$ and $\divv u = 0$.
Consider $(f + \Delta u) \in H^{-1}(\Omega,\Ci^d)$.
If $v \in H^1_0(\Omega,\Ci^d)$ and $\divv v = 0$, then 
\[
(f + \Delta u)(v)
= (f,v)_{L_2(\Omega,\Ci^d)} 
    + \langle \Delta u, v \rangle_{H^{-1}(\Omega,\Ci^d) \times H^1_0(\Omega,\Ci^d)}
= (f,v)_{L_2(\Omega,\Ci^d)} - \gota(u,v)
= 0
.  \]
Since $\Omega$ has a Lipschitz boundary, it follows from 
\cite{Sohr} Lemma~II.2.1.1b that there exists a $\pi \in L_2(\Omega)$
such that $f + \Delta u = \nabla \pi$ in $H^{-1}(\Omega,\Ci^d)$.

`$\Leftarrow$'.
Obviously $u \in V_D$.
If $v \in V_D$, then $v \in H^1_0(\Omega,\Ci^d)$ and $\divv v = 0$.
So 
\begin{eqnarray*}
\gota(u,v)
& = & - \langle \Delta u,v \rangle_{H^{-1}(\Omega,\Ci^d) \times H^1_0(\Omega,\Ci^d)}
= \langle f - \nabla \pi,v \rangle_{H^{-1}(\Omega,\Ci^d) \times H^1_0(\Omega,\Ci^d)}  \\
& = & (f,v)_{L_2(\Omega,\Ci^d)} + (\pi, \divv v)_{L_2(\Omega)}
= (f,v)_{L_2(\Omega,\Ci^d)}
\end{eqnarray*}
and the proposition follows.
\end{proof}

We next consider the operator $A^N$.
In order to obtain a description as in Proposition~\ref{pfried302}\ref{pfried302-2}
for $A^D$, we first need a kind of normal derivative.
If $\Omega$ is smooth, $u \in H^2(\Omega,\Ci^d)$, $\pi \in H^1(\Omega)$
and $f \in L_2(\Omega,\Ci^d)$ 
with $- \Delta u + \nabla \pi = f$ in $H^{-1}(\Omega,\Ci^d)$, 
then 
\begin{eqnarray*}
\lefteqn{
\langle \partial_\nu u - (\Tr \pi) \, \nu, \Tr \Phi \rangle_{H^{-1/2} \times H^{1/2}}
} \hspace*{20mm}  \\*
& = & \int_\Omega \nabla u \cdot \overline{\nabla \Phi} 
   + \langle \Delta u, \Phi \rangle_{H^{-1} \times H^1}
   - \langle \nabla \pi, \Phi \rangle_{H^{-1} \times H^1}
   - \int_\Omega \pi \, \overline{\divv \Phi}  \\
& = & \int_\Omega \nabla u \cdot \overline{\nabla \Phi} 
   - \int_\Omega \pi \, \overline{\divv \Phi}
   - \int_\Omega f \cdot \overline{\Phi}
\end{eqnarray*}
for all $\Phi \in H^1(\Omega,\Ci^d)$.
We next define a weak version of this normal derivative.

\begin{lemma} \label{lstokes403}
Let $u \in H^1(\Omega,\Ci^d)$, $\pi \in L_2(\Omega)$ and $f \in L_2(\Omega,\Ci^d)$.
Suppose that $- \Delta u + \nabla \pi = f$ in $H^{-1}(\Omega,\Ci^d)$.
Then there exists a unique $F \in H^{-1/2}(\partial \Omega, \Ci^d)$
such that 
\[
\langle F, \varphi \rangle_{H^{-1/2}(\partial \Omega, \Ci^d) \times H^{1/2}(\partial \Omega, \Ci^d)}
= \int_\Omega \nabla u \cdot \overline{\nabla \Phi} 
   - \int_\Omega \pi \, \overline{\divv \Phi}
   - \int_\Omega f \cdot \overline{\Phi}
\]
for all $\varphi \in H^{1/2}(\partial \Omega, \Ci^d)$ and
$\Phi \in H^1(\Omega,\Ci^d)$ with $\Tr \Phi = \varphi$.
\end{lemma}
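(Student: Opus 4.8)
The plan is to define $F$ as the bounded conjugate-linear functional on $H^{1/2}(\partial\Omega,\Ci^d)$ represented by the right-hand side of the asserted identity; the substance of the lemma is that this right-hand side does not depend on the choice of lifting $\Phi$ and that it is continuous in $\varphi$. First I would rephrase the hypothesis. Since $H^{-1}(\Omega,\Ci^d)=(H^1_0(\Omega,\Ci^d))'$, and for $\Psi\in H^1_0(\Omega,\Ci^d)$ one has $\langle-\Delta u,\Psi\rangle=\int_\Omega\nabla u\cdot\overline{\nabla\Psi}$ and $\langle\nabla\pi,\Psi\rangle=-\int_\Omega\pi\,\overline{\divv\Psi}$ (integration by parts with no boundary term because $\Tr\Psi=0$, which is legitimate as $\pi\in L_2(\Omega)$), the assumption $-\Delta u+\nabla\pi=f$ in $H^{-1}(\Omega,\Ci^d)$ is equivalent to
\[
\int_\Omega\nabla u\cdot\overline{\nabla\Psi}-\int_\Omega\pi\,\overline{\divv\Psi}-\int_\Omega f\cdot\overline\Psi=0
\qquad\mbox{for all }\Psi\in H^1_0(\Omega,\Ci^d).
\]
Write $\ell(\Phi)$ for the left-hand side with $\Psi$ replaced by an arbitrary $\Phi\in H^1(\Omega,\Ci^d)$. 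Then $\ell$ is a conjugate-linear functional on $H^1(\Omega,\Ci^d)$; by the identity above it vanishes on $H^1_0(\Omega,\Ci^d)=\ker(\Tr)$; and by the Cauchy--Schwarz inequality it is bounded, with
\[
|\ell(\Phi)|\le\bigl(\|\nabla u\|_{L_2(\Omega)}+\|\pi\|_{L_2(\Omega)}+\|f\|_{L_2(\Omega)}\bigr)\,\|\Phi\|_{H^1(\Omega,\Ci^d)}.
\]

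Next I would invoke the standard fact that on a bounded Lipschitz domain the trace map $\Tr\colon H^1(\Omega,\Ci^d)\to H^{1/2}(\partial\Omega,\Ci^d)$ is a bounded surjection. By the open mapping theorem it then has a bounded right inverse, so there is a constant $c>0$ such that each $\varphi\in H^{1/2}(\partial\Omega,\Ci^d)$ admits a lifting $\Phi\in H^1(\Omega,\Ci^d)$ with $\Tr\Phi=\varphi$ and $\|\Phi\|_{H^1(\Omega,\Ci^d)}\le c\,\|\varphi\|_{H^{1/2}(\partial\Omega,\Ci^d)}$. Since $\ell$ annihilates $\ker(\Tr)$, the value $\ell(\Phi)$ depends only on $\varphi=\Tr\Phi$ and not on the particular lifting; this is exactly the independence of $\Phi$ in the statement, and it shows the formula defining $F$ is unambiguous. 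Setting $\langle F,\varphi\rangle=\ell(\Phi)$ for any such $\Phi$, the bound on $\ell$ together with the lifting estimate yields $|\langle F,\varphi\rangle|\le c\,\bigl(\|\nabla u\|_{L_2(\Omega)}+\|\pi\|_{L_2(\Omega)}+\|f\|_{L_2(\Omega)}\bigr)\,\|\varphi\|_{H^{1/2}(\partial\Omega,\Ci^d)}$, so $F\in H^{-1/2}(\partial\Omega,\Ci^d)$ and satisfies the desired identity. Uniqueness is immediate: if $F_1$ and $F_2$ both obey the identity, then $\langle F_1-F_2,\varphi\rangle=0$ for every $\varphi\in H^{1/2}(\partial\Omega,\Ci^d)$, hence $F_1=F_2$.

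I do not expect a genuine obstacle here. The only points requiring care are the signs in the integration by parts that converts the hypothesis into the vanishing of $\ell$ on $H^1_0(\Omega,\Ci^d)$, and the use of surjectivity of the trace on a Lipschitz domain (equivalently, of a bounded extension operator $H^{1/2}(\partial\Omega,\Ci^d)\to H^1(\Omega,\Ci^d)$), which is precisely what permits passing from a bounded functional on $H^1(\Omega,\Ci^d)$ annihilating $H^1_0(\Omega,\Ci^d)$ to one on $H^{1/2}(\partial\Omega,\Ci^d)$.
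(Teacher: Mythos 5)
Your proof is correct and is essentially the paper's own argument: show that the right-hand side vanishes for liftings with zero trace (equivalently, that it defines a bounded conjugate-linear functional on $H^1(\Omega,\Ci^d)$ annihilating $\ker\Tr$), then use the bounded right inverse of the trace map $\Tr\colon H^1(\Omega,\Ci^d)\to H^{1/2}(\partial\Omega,\Ci^d)$ to descend to a well-defined element of $H^{-1/2}(\partial\Omega,\Ci^d)$, with uniqueness being immediate. The only cosmetic difference is that you make explicit the equivalence of the hypothesis with the vanishing of the functional on $H^1_0(\Omega,\Ci^d)$ and cite the open mapping theorem for the right inverse, which the paper simply asserts.
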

\begin{proof}
Let $\Phi \in H^1(\Omega,\Ci^d)$ and suppose that $\Tr \Phi = 0$.
Then 
\begin{eqnarray*}
\int_\Omega \nabla u \cdot \overline{\nabla \Phi} 
   - \int_\Omega \pi \, \overline{\divv \Phi}
   - \int_\Omega f \cdot \overline{\Phi}
& = & \langle - \Delta u, \Phi \rangle_{H^{-1} \times H^1}
   + \langle \nabla \pi, \Phi \rangle_{H^{-1} \times H^1} 
   - \int_\Omega f \cdot \overline{\Phi}   \\
& = & \langle - \Delta u + \nabla \pi - f, \Phi \rangle_{H^{-1}(\Omega,\Ci^d) \times H^1(\Omega,\Ci^d)}
= 0 .
\end{eqnarray*}
Hence there exists a unique $F \colon H^{1/2}(\partial \Omega, \Ci^d) \to \Ci$
such that 
\[
F(\varphi) 
= \int_\Omega \nabla u \cdot \overline{\nabla \Phi} 
   - \int_\Omega \pi \, \overline{\divv \Phi}
   - \int_\Omega f \cdot \overline{\Phi}
\]
for all $\varphi \in H^{1/2}(\partial \Omega, \Ci^d)$ and
$\Phi \in H^1(\Omega,\Ci^d)$ with $\Tr \Phi = \varphi$.
Obviously $F$ is anti-linear and 
\[
|F(\Tr \Phi)| \leq (\|u\|_{H^1(\Omega,\Ci^d)} 
                     + \|\pi\|_{L_2(\Omega)}
                     + \|f\|_{L_2(\Omega,\Ci^d)} ) \|\Phi\|_{H^1(\Omega,\Ci^d)}
\]
for all $\Phi \in H^1(\Omega,\Ci^d)$.
Since $\Tr \colon H^1(\Omega,\Ci^d) \to H^{1/2}(\partial \Omega, \Ci^d)$
has a continuous right-inverse, the lemma follows.
\end{proof}

We denote the element $F \in H^{-1/2}(\partial \Omega, \Ci^d)$ in Lemma~\ref{lstokes403}
by $\partial_\nu(u,\pi)$.
So 
\[
\langle \partial_\nu(u,\pi), \Tr \Phi \rangle_{H^{-1/2}(\partial \Omega, \Ci^d) \times H^{1/2}(\partial \Omega, \Ci^d)}
= \int_\Omega \nabla u \cdot \overline{\nabla \Phi} 
   - \int_\Omega \pi \, \overline{\divv \Phi}
   - \int_\Omega f \cdot \overline{\Phi}
\]
for all $u \in H^1(\Omega,\Ci^d)$, $\pi \in L_2(\Omega)$, $f\in L_2(\Omega,\Ci^d)$
and $\Phi \in H^1(\Omega,\Ci^d)$ with 
$- \Delta u + \nabla \pi = f$ in $H^{-1}(\Omega,\Ci^d)$.

Let $u \in H^1(\Omega,\Ci^d)$, $\pi \in L_2(\Omega)$, $f \in L_2(\Omega,\Ci^d)$ 
and $c \in \Ci$ with 
$- \Delta u + \nabla \pi = f$ in $H^{-1}(\Omega,\Ci^d)$.
Then $- \Delta u + \nabla (\pi + c \, \one_\Omega) = f$ in $H^{-1}(\Omega,\Ci^d)$
and 
\begin{equation}
\partial_\nu(u,\pi + c \, \one_\Omega) = \partial_\nu(u,\pi) - c \, \nu
\label{efriedS4;1}
\end{equation}
by a straightforward calculation.

We also need to characterise the range $\Tr V$ of the form domain under the 
trace map.
Note that $\Tr \colon H^1(\Omega,\Ci^d) \to H^{1/2}(\partial \Omega, \Ci^d)$.
If $u \in V$, then the divergence theorem gives
\[
0 = \int_\Omega \divv u
= \int_{\partial \Omega} \nu \cdot \Tr u
.  \]
Define 
\[
L_{2,0}(\partial \Omega,\Ci^d) 
= \{ \varphi \in L_2(\partial \Omega, \Ci^d) : 
        \int_{\partial \Omega} \varphi \cdot \nu = 0 \}
.  \]
Then $\Tr(V) \subset H^{1/2}(\partial \Omega, \Ci^d) \cap L_{2,0}(\partial \Omega,\Ci^d)$.
We next show that one actually has an equality.

\begin{lemma} \label{lstokes401}
$\Tr(V) = H^{1/2}(\partial \Omega, \Ci^d) \cap L_{2,0}(\partial \Omega,\Ci^d)$.
\end{lemma}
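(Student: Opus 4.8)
The inclusion $\Tr(V) \subset H^{1/2}(\partial \Omega, \Ci^d) \cap L_{2,0}(\partial \Omega,\Ci^d)$ has just been established above via the divergence theorem, so the task is to prove the reverse inclusion. The plan is to start from an arbitrary boundary datum $\varphi$ in the right-hand side, lift it to an $H^1$-function by the trace theorem, and then correct the lift so as to make it solenoidal without disturbing its boundary values; the correction is supplied by the solvability of the divergence equation on a bounded Lipschitz domain.

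In detail, I would fix $\varphi \in H^{1/2}(\partial \Omega, \Ci^d)$ with $\int_{\partial \Omega} \varphi \cdot \nu = 0$. Since the trace map $\Tr \colon H^1(\Omega,\Ci^d) \to H^{1/2}(\partial \Omega, \Ci^d)$ is surjective and admits a bounded right inverse, I would choose $w \in H^1(\Omega,\Ci^d)$ with $\Tr w = \varphi$. The only obstruction to $w$ lying in $V$ is that $g := \divv w \in L_2(\Omega)$ need not vanish; however, the divergence theorem gives $\int_\Omega g = \int_{\partial \Omega} \nu \cdot \Tr w = \int_{\partial \Omega} \nu \cdot \varphi = 0$.

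Next I would invoke the classical result that on a bounded Lipschitz domain the divergence operator maps $H^1_0(\Omega,\Ci^d)$ onto $\{ g \in L_2(\Omega) : \int_\Omega g = 0 \}$ (the Bogovski\u\i{} construction; see e.g.\ \cite{Sohr} or \cite{Temam}). Applying this to $g$ (treating real and imaginary parts separately if one wishes to cite the real-valued statement) yields $z \in H^1_0(\Omega,\Ci^d)$ with $\divv z = g$. Setting $u := w - z$, one has $u \in H^1(\Omega,\Ci^d)$ with $\divv u = g - g = 0$, so $u \in V$, while $\Tr u = \Tr w - \Tr z = \varphi - 0 = \varphi$ because $z \in H^1_0(\Omega,\Ci^d)$. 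Hence $\varphi \in \Tr(V)$, which completes the proof.

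The only non-routine ingredient here is the surjectivity of $\divv \colon H^1_0(\Omega,\Ci^d) \to L_2(\Omega)$ onto the mean-zero subspace, which relies essentially on the Lipschitz character of $\partial\Omega$; once this standard fact is cited, the remaining argument is just the one-line correction $u = w - z$ and a check of the trace and divergence.
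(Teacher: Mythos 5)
Your proof is correct and is essentially identical to the paper's: both lift $\varphi$ by the trace theorem, observe that the divergence of the lift has mean zero, and correct it by a Bogovski\u\i-type solution of $\divv z = \divv w$ with $z \in H^1_0(\Omega,\Ci^d)$ (the paper cites \cite{Sohr} Lemma~II.2.1.1a for exactly this step). No differences worth noting.
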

\begin{proof}
We only have to show `$\supset$'.
Let $\varphi \in H^{1/2}(\partial \Omega, \Ci^d) \cap L_{2,0}(\partial \Omega,\Ci^d)$.
There exists a $u \in H^1(\Omega,\Ci^d)$ such that 
$\Tr u = \varphi$.
Then $\int_\Omega \divv u = \int_{\partial \Omega} \nu \cdot \Tr u
= \int_{\partial \Omega} \nu \cdot \varphi = 0$.
Hence by \cite{Sohr} Lemma~II.2.1.1a there exists a $w \in H^1_0(\Omega,\Ci^d)$
such that $\divv w = \divv u$.
Set $v = u - w$.
Then $v \in H^1(\Omega,\Ci^d)$ and $\divv v = 0$.
So $v \in V$.
Moreover, $\Tr v = \Tr u = \varphi$.
\end{proof}

Although we do not need the next density lemma, we state it
for completeness.

\begin{lemma} \label{lstokes402}
The range $\Tr(V)$ is dense in $L_{2,0}(\partial \Omega,\Ci^d)$.
\end{lemma}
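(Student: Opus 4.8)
The plan is to reduce the statement, via Lemma~\ref{lstokes401}, to the density of $H^{1/2}(\partial\Omega,\Ci^d) \cap L_{2,0}(\partial\Omega,\Ci^d)$ in $L_{2,0}(\partial\Omega,\Ci^d)$, and then to handle the single linear constraint defining $L_{2,0}$ by correcting an approximating sequence with a suitable fixed element of $H^{1/2}$.

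First I would observe that $L_{2,0}(\partial\Omega,\Ci^d)$ is precisely the orthogonal complement of $\nu$ in $L_2(\partial\Omega,\Ci^d)$: since $\partial\Omega$ has finite surface measure and $|\nu| = 1$ almost everywhere, $\nu \in L_2(\partial\Omega,\Ci^d)$, and (as $\nu$ is real-valued) $\varphi \in L_{2,0}(\partial\Omega,\Ci^d)$ if and only if $(\varphi,\nu)_{L_2(\partial\Omega,\Ci^d)} = \int_{\partial\Omega} \varphi \cdot \nu = 0$. In particular $L_{2,0}(\partial\Omega,\Ci^d)$ is a closed hyperplane. By Lemma~\ref{lstokes401} it therefore suffices to show that $H^{1/2}(\partial\Omega,\Ci^d) \cap L_{2,0}(\partial\Omega,\Ci^d)$ is dense in it, and here I would use the standard fact that $H^{1/2}(\partial\Omega,\Ci^d)$ is dense in $L_2(\partial\Omega,\Ci^d)$ (for instance because restrictions to $\partial\Omega$ of functions in $C^\infty(\overline\Omega,\Ci^d)$, or Lipschitz functions on $\partial\Omega$, lie in $H^{1/2}$ and are dense in $L_2(\partial\Omega,\Ci^d)$; a reference on Lipschitz boundaries should be cited).

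Next I would fix a reference vector field: let $\chi = \Tr(\mathrm{id}_\Omega)$, the trace of the map $x \mapsto x$. Since $\Omega$ is bounded, $\mathrm{id}_\Omega \in H^1(\Omega,\Ci^d)$, so $\chi \in H^{1/2}(\partial\Omega,\Ci^d)$, and the divergence theorem gives $(\chi,\nu)_{L_2(\partial\Omega,\Ci^d)} = \int_{\partial\Omega} x \cdot \nu = \int_\Omega \divv \mathrm{id}_\Omega = d\,|\Omega| \neq 0$. Given $\varphi \in L_{2,0}(\partial\Omega,\Ci^d)$, choose $\psi_n \in H^{1/2}(\partial\Omega,\Ci^d)$ with $\psi_n \to \varphi$ in $L_2(\partial\Omega,\Ci^d)$ and set $\widetilde\psi_n = \psi_n - \frac{(\psi_n,\nu)_{L_2(\partial\Omega,\Ci^d)}}{(\chi,\nu)_{L_2(\partial\Omega,\Ci^d)}}\,\chi$. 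Then $\widetilde\psi_n \in H^{1/2}(\partial\Omega,\Ci^d)$ and $(\widetilde\psi_n,\nu)_{L_2(\partial\Omega,\Ci^d)} = 0$, so $\widetilde\psi_n \in H^{1/2}(\partial\Omega,\Ci^d) \cap L_{2,0}(\partial\Omega,\Ci^d) = \Tr(V)$; and since $(\psi_n,\nu)_{L_2(\partial\Omega,\Ci^d)} \to (\varphi,\nu)_{L_2(\partial\Omega,\Ci^d)} = 0$, the correction term tends to $0$ in $L_2$, whence $\widetilde\psi_n \to \varphi$ in $L_2(\partial\Omega,\Ci^d)$. This yields the claimed density.

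The only mild subtlety — the point worth isolating — is the need for a fixed $H^{1/2}$ vector field on $\partial\Omega$ with nonzero normal flux: it cannot be taken constant, since $\int_{\partial\Omega} \nu = 0$, but the trace of the identity field works on any bounded Lipschitz domain. Everything else is routine linear-algebra-plus-density bookkeeping.
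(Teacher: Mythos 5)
Your proof is correct and follows essentially the same route as the paper: approximate $\varphi$ in $L_2(\partial \Omega,\Ci^d)$ by elements of $H^{1/2}(\partial \Omega,\Ci^d)$ and subtract a multiple of an $H^{1/2}$ field with nonzero normal flux to land in $H^{1/2}(\partial \Omega, \Ci^d) \cap L_{2,0}(\partial \Omega,\Ci^d) = \Tr(V)$ via Lemma~\ref{lstokes401}. The only difference is the corrector: since $\nu$ need not belong to $H^{1/2}(\partial \Omega,\Ci^d)$ on a Lipschitz boundary, the paper approximates $\nu$ itself by a sequence $\nu_n \in H^{1/2}(\partial \Omega,\Ci^d)$ with $\int_{\partial \Omega} \nu_n \cdot \nu \neq 0$, whereas you use the single fixed field $\Tr(\mathrm{id}_\Omega)$, whose flux $d\,|\Omega| \neq 0$ comes from the divergence theorem --- a slightly tidier choice that works equally well.
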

\begin{proof}
Let $\varphi \in L_{2,0}(\partial \Omega,\Ci^d)$.
Since $H^{1/2}(\partial \Omega, \Ci^d)$ is dense in $L_2(\partial \Omega,\Ci^d)$, 
there is a sequence $(\varphi_n)_{n \in \Ni}$ in $H^{1/2}(\partial \Omega, \Ci^d)$
such that 
$\lim \varphi_n = \varphi$ in $L_2(\partial \Omega,\Ci^d)$.
Then 
$\lim \int_{\partial \Omega} \varphi_n \cdot \nu
= \int_{\partial \Omega} \varphi \cdot \nu = 0$.
Note that $\nu \in L_\infty(\partial \Omega,\Ci^d) \subset L_2(\partial \Omega,\Ci^d)$.
Hence as above there is a sequence $(\nu_n)_{n \in \Ni}$ 
in $H^{1/2}(\partial \Omega, \Ci^d)$ such that 
$\lim \nu_n = \nu$ in $L_2(\partial \Omega,\Ci^d)$.
Then $\lim \int_{\partial \Omega} \nu_n \cdot \nu = \sigma(\partial \Omega) \neq 0$,
so we may assume that $\int_{\partial \Omega} \nu_n \cdot \nu \neq 0$ 
for all $n \in \Ni$.
For all $n \in \Ni$ define 
$\tilde \varphi_n = \varphi_n - c_n \, \nu_n$,
where $c_n = \Big( \int_{\partial \Omega} \nu_n \cdot \nu \Big)^{-1} 
                   \int_{\partial \Omega} \varphi_n \cdot \nu$.
Then $\tilde \varphi_n \in H^{1/2}(\partial \Omega, \Ci^d) \cap L_{2,0}(\partial \Omega,\Ci^d) = \Tr(V)$
by Lemma~\ref{lstokes401}.
Moreover, $\lim \tilde \varphi_n = \varphi$ in $L_2(\partial \Omega,\Ci^d)$.
\end{proof}

Now we are able to characterise the operator $A^N$.

\begin{prop} \label{pfried303}
\mbox{}
\begin{tabel}
\item \label{pfried303-1}
The closure of the space $V$ in $L_2(\Omega,\Ci^d)$ is 
$ \{ u \in L_2(\Omega,\Ci^d) : \divv u = 0 \} $.
\item \label{pfried303-3}
$V^\perp = \{ \nabla \pi : \pi \in H^1_0(\Omega) \} $,
where the orthogonal complement is in $L_2(\Omega,\Ci^d)$.
\item \label{pfried303-2}
Let $u,f \in \overline V^{(H)}$.
Then $u \in D(A^N)$ and $A^N u = f$ if and only if $u \in H^1(\Omega,\Ci^d)$,
$\divv u = 0$ and there exists a $\pi \in L_2(\Omega)$ such that 
$f = - \Delta u + \nabla \pi$ in $H^{-1}(\Omega,\Ci^d)$ 
and $\partial_\nu(u,\pi) = 0$.
\end{tabel}
\end{prop}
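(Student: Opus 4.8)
The plan is to characterise $A^N$ by identifying $D(A^N)$ with the graph associated to the pair $(\gota, j)$ via the general form machinery, and then translating the abstract form identity $\gota(u,v) = (f,j(v))_K$ — valid for all $v \in V$ — into the pointwise PDE plus the weak normal-derivative condition. Since $A^N$ is the self-adjoint operator in $\overline V^{(H)}$ associated with the $i$-elliptic form $\gota$ on $V$, part \ref{pfried303-1} is immediate from Proposition~\ref{pfried302}\ref{pfried302-1} or directly from the definition of $V$ together with the fact that $\divv$ is continuous from $L_2$ into $H^{-1}$, so the constraint $\divv u = 0$ is closed; and part \ref{pfried303-3} is then the orthogonal complement statement, which should follow from \cite{Temam} or \cite{Sohr} in the same way as Proposition~\ref{pfried302}\ref{pfried302-3} — the space $\{\nabla\pi : \pi \in H^1_0(\Omega)\}$ is precisely the $L_2$-orthogonal complement of divergence-free $L_2$ vector fields with vanishing normal trace, i.e.\ of $\overline V^{(H)}$.

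The substance is part \ref{pfried303-2}. For the forward direction, suppose $u \in D(A^N)$ with $A^N u = f$. By definition of the operator associated with $\gota$, we have $u \in V$ (so $u \in H^1(\Omega,\Ci^d)$ and $\divv u = 0$) and
\[
\int_\Omega \nabla u \cdot \overline{\nabla v} = (f,v)_H
\qquad \text{for all } v \in V.
\]
First restrict to $v \in C^\infty_{c,\sigma}(\Omega)$: this gives $\langle -\Delta u - f, v\rangle = 0$ for all solenoidal test functions, so by \cite{Sohr} Lemma~II.2.1.1b (exactly as in the proof of Proposition~\ref{pfried302}\ref{pfried302-2}) there exists $\pi \in L_2(\Omega)$ with $-\Delta u + \nabla\pi = f$ in $H^{-1}(\Omega,\Ci^d)$. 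Then I would feed this $\pi$ into the definition of $\partial_\nu(u,\pi)$: for arbitrary $\Phi \in H^1(\Omega,\Ci^d)$,
\[
\langle \partial_\nu(u,\pi), \Tr\Phi\rangle
= \int_\Omega \nabla u \cdot \overline{\nabla\Phi} - \int_\Omega \pi\,\overline{\divv\Phi} - \int_\Omega f\cdot\overline{\Phi}.
\]
For $\Phi \in V$ the middle term vanishes ($\divv\Phi = 0$) and the remaining two terms cancel by the form identity, so $\langle \partial_\nu(u,\pi), \varphi\rangle = 0$ for all $\varphi \in \Tr(V) = H^{1/2}(\partial\Omega,\Ci^d) \cap L_{2,0}(\partial\Omega,\Ci^d)$ (Lemma~\ref{lstokes401}). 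Thus $\partial_\nu(u,\pi)$ annihilates a closed subspace of $H^{1/2}(\partial\Omega,\Ci^d)$ of codimension one, whose annihilator is spanned by $\nu$; hence $\partial_\nu(u,\pi) = c\,\nu$ for some $c \in \Ci$. Using the gauge freedom \eqref{efriedS4;1}, replace $\pi$ by $\pi + c\,\one_\Omega$ to arrange $\partial_\nu(u,\pi) = 0$, as claimed.

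For the converse, suppose $u \in H^1(\Omega,\Ci^d)$ with $\divv u = 0$ and $\pi \in L_2(\Omega)$ with $-\Delta u + \nabla\pi = f$ and $\partial_\nu(u,\pi) = 0$. Then $u \in V$, and for every $v \in V$ we compute, using the defining property of $\partial_\nu(u,\pi)$ with $\Phi = v$ and $\divv v = 0$,
\[
0 = \langle \partial_\nu(u,\pi), \Tr v\rangle = \int_\Omega \nabla u\cdot\overline{\nabla v} - \int_\Omega f\cdot\overline v,
\]
so $\gota(u,v) = (f,v)_H$ for all $v \in V$; since $f \in \overline V^{(H)}$ this says exactly $u \in D(A^N)$ and $A^N u = f$. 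The main obstacle I anticipate is the codimension-one argument pinning down $\partial_\nu(u,\pi) = c\,\nu$: it requires knowing that the annihilator in $H^{-1/2}(\partial\Omega,\Ci^d)$ of the closed subspace $H^{1/2}(\partial\Omega,\Ci^d) \cap L_{2,0}(\partial\Omega,\Ci^d)$ is one-dimensional and spanned by $\nu$ — which in turn uses that $\nu \in L_\infty(\partial\Omega,\Ci^d) \subset H^{-1/2}(\partial\Omega,\Ci^d)$ is a nonzero functional vanishing on this subspace, and that $H^{1/2} \cap L_{2,0}$ is dense in $L_{2,0}$ (Lemma~\ref{lstokes402}), so no further $H^{-1/2}$ functionals beyond multiples of $\nu$ can vanish on it. Everything else is a routine rerun of the Dirichlet case.
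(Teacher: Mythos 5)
Your treatment of part \ref{pfried303-2} is essentially the paper's own proof: you produce the pressure $\pi\in L_2(\Omega)$ from \cite{Sohr} Lemma~II.2.1.1b (the paper tests against $\{v\in H^1_0(\Omega,\Ci^d):\divv v=0\}$ rather than $C^\infty_{c,\sigma}(\Omega)$, which is an immaterial difference), then test the form identity against $v\in V$ to conclude that $\partial_\nu(u,\pi)$ annihilates $\Tr(V)=H^{1/2}(\partial\Omega,\Ci^d)\cap L_{2,0}(\partial\Omega,\Ci^d)$ by Lemma~\ref{lstokes401}, and finally use the gauge freedom (\ref{efriedS4;1}) to normalise the normal derivative to zero; the converse is identical to the paper's. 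One step, however, is justified incorrectly: to get $\partial_\nu(u,\pi)=c\,\nu$ you invoke Lemma~\ref{lstokes402}, i.e.\ $L_2$-density of $\Tr(V)$ in $L_{2,0}(\partial\Omega,\Ci^d)$. That density only excludes functionals which are $L_2$-continuous; a general $F\in H^{-1/2}(\partial\Omega,\Ci^d)$ vanishing on $H^{1/2}\cap L_{2,0}$ need not extend to $L_{2,0}$, so this argument proves nothing about it. What you actually need (and what is true) is that $H^{1/2}(\partial\Omega,\Ci^d)\cap L_{2,0}(\partial\Omega,\Ci^d)$ is the kernel of the nonzero continuous functional $\varphi\mapsto\int_{\partial\Omega}\varphi\cdot\nu$ on $H^{1/2}(\partial\Omega,\Ci^d)$, and the annihilator of the kernel of a nonzero functional is its scalar multiples; the paper sidesteps even this by fixing $\varphi_0\in H^{1/2}(\partial\Omega,\Ci^d)$ with $(\varphi_0,\nu)_{L_2(\partial\Omega,\Ci^d)}\neq0$, choosing $c$ so that $\partial_\nu(u,\pi)-c\,\nu$ annihilates $\varphi_0$, and using $\spann\big(\{\varphi_0\}\cup(H^{1/2}\cap L_{2,0})\big)=H^{1/2}(\partial\Omega,\Ci^d)$. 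Either repair is routine, but your stated reason is not a proof.

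Parts \ref{pfried303-1} and \ref{pfried303-3} are also not as ``immediate'' as you assert. For \ref{pfried303-1}, continuity of $\divv$ from $L_2$ into $H^{-1}$ only gives $\overline V\subset\{u\in L_2(\Omega,\Ci^d):\divv u=0\}$; the substantive content is the reverse inclusion, i.e.\ density of $H^1$ solenoidal fields in $L_2$ solenoidal fields, and this cannot be deduced from Proposition~\ref{pfried302}\ref{pfried302-1}, since the closure of $V_D$ is $L_{2,\sigma}(\Omega)$, which is strictly smaller than $\{u\in L_2(\Omega,\Ci^d):\divv u=0\}$ (it carries in addition a vanishing normal trace). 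The paper cites \cite{MMW} Lemma~2.1 precisely for this density. For \ref{pfried303-3} you have the two Helmholtz-type decompositions interchanged: the orthogonal complement of the divergence-free fields with vanishing normal trace, that is of $L_{2,\sigma}(\Omega)=\overline{V_D}$, is $\{\nabla\pi:\pi\in H^1(\Omega)\}$ (that is Proposition~\ref{pfried302}\ref{pfried302-3}), whereas $\overline V^{(H)}=\{u\in L_2(\Omega,\Ci^d):\divv u=0\}$ carries no normal-trace condition and its complement is $\{\nabla\pi:\pi\in H^1_0(\Omega)\}$. The correct short route, as in the paper, is $V^\perp=\big(\{\nabla\pi:\pi\in C_c^\infty(\Omega)\}^\perp\big)^\perp=\overline{\{\nabla\pi:\pi\in H^1_0(\Omega)\}}$, combined with closedness of this gradient space via the Dirichlet Poincar\'e inequality (\cite{MazED2} Corollary~1.1.11).
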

\begin{proof}
`\ref{pfried303-1}'. 
See \cite{MMW} Lemma~2.1.

`\ref{pfried303-3}'. 
By definition of $V$ we obtain that $V^\perp$ is the closure of 
$ \{ \nabla \pi : \pi \in C_c^\infty(\Omega) \} $ in $L_2(\Omega, \Ci^d)$.
Obviously the latter is the closure of the space 
$ \{ \nabla \pi : \pi \in H^1_0(\Omega) \} $ in $L_2(\Omega, \Ci^d)$.
By \cite{MazED2} Corollary~1.1.11 and the Dirichlet Poincar\'e inequality the space 
$ \{ \nabla \pi : \pi \in H^1_0(\Omega) \} $ in closed in $L_2(\Omega, \Ci^d)$.

`\ref{pfried303-2}'. 
`$\Rightarrow$'.
By definition $u \in V$ and $\gota(u,v) = (f,v)_{L_2(\Omega,\Ci^d)}$
for all $v \in V$.
So $u \in H^1(\Omega,\Ci^d)$ and $\divv v = 0$.
Consider $(f + \Delta u) \in H^{-1}(\Omega,\Ci^d)$.
If $v \in H^1_0(\Omega,\Ci^d)$ and $\divv v = 0$, then 
\[
(f + \Delta u)(v)
= (f,v)_{L_2(\Omega,\Ci^d)} 
    + \langle \Delta u, v \rangle_{H^{-1}(\Omega,\Ci^d) \times H^1_0(\Omega,\Ci^d)}
= (f,v)_{L_2(\Omega,\Ci^d)} - \gota(u,v)
= 0
.  \]
Since $\Omega$ has a Lipschitz boundary, it follows from 
\cite{Sohr} Lemma~II.2.1.1b that there exists a $\pi \in L_2(\Omega)$
such that $f + \Delta u = \nabla \pi$ in $H^{-1}(\Omega,\Ci^d)$.

If $v \in V$, then 
\begin{eqnarray*}
\langle \partial_\nu(u,\pi), \Tr v \rangle_{H^{-1/2}(\partial \Omega, \Ci^d) \times H^{1/2}(\partial \Omega, \Ci^d)}
& = & \int_\Omega \nabla u \cdot \overline{\nabla v} 
   - \int_\Omega \pi \, \overline{\divv v}
   - \int_\Omega f \cdot \overline v  \\
& = & \gota(u,v)
   - (f,v)_{L_2(\Omega,\Ci^d)}
= 0 .
\end{eqnarray*}
So by Lemma~\ref{lstokes401} one deduces that 
$\langle \partial_\nu(u,\pi), \varphi \rangle_{H^{-1/2}(\partial \Omega, \Ci^d) \times H^{1/2}(\partial \Omega, \Ci^d)}
= 0$
for all $\varphi \in H^{1/2}(\partial \Omega, \Ci^d) \cap L_{2,0}(\partial \Omega,\Ci^d)$.

Fix $\varphi_0 \in H^{1/2}(\partial \Omega, \Ci^d)$ such that 
$(\varphi_0,\nu)_{L_2(\partial \Omega, \Ci^d)} \neq 0$.
Let $c \in \Ci$ be such that 
\[
\langle \partial_\nu(u,\pi) - c \, \nu, 
     \varphi_0 \rangle_{H^{-1/2}(\partial \Omega, \Ci^d) \times H^{1/2}(\partial \Omega, \Ci^d)}
= 0
.  \]
Then it follows from (\ref{efriedS4;1}) that 
\[
\langle \partial_\nu(u,\pi + c \, \one_\Omega), 
      \varphi \rangle_{H^{-1/2}(\partial \Omega, \Ci^d) \times H^{1/2}(\partial \Omega, \Ci^d)}
= 0
\]
for all
 $\varphi \in \{ \varphi_0 \} \cup \Big( H^{1/2}(\partial \Omega, \Ci^d) \cap L_{2,0}(\partial \Omega,\Ci^d) \Big)$ 
and then by linearity for all 
\[
\varphi \in \spann \bigg( \{ \varphi_0 \} \cup \Big( H^{1/2}(\partial \Omega, \Ci^d) \cap L_{2,0}(\partial \Omega,\Ci^d) \Big) \bigg)
= H^{1/2}(\partial \Omega, \Ci^d)
.  \]
So $\partial_\nu(u,\pi + c \, \one_\Omega) = 0$.
Replacing $\pi$ by $\pi + c \, \one_\Omega$ completes the proof of the 
implication~`$\Rightarrow$'.

`$\Leftarrow$'.
Let $u \in H^1(\Omega,\Ci^d)$ and $f \in \overline V^{(H)}$.
Suppose that 
$\divv u = 0$ and there exists a $\pi \in L_2(\Omega)$ such that 
$f = - \Delta u + \nabla \pi$ and $\partial_\nu(u,\pi) = 0$.
Then $u \in V$ and 
\[
\int_\Omega \nabla u \cdot \overline{\nabla v} 
   - \int_\Omega \pi \, \overline{\divv v}
   - \int_\Omega f \cdot \overline v
= \langle \partial_\nu(u,\pi), \Tr v \rangle_{H^{-1/2}(\partial \Omega, \Ci^d) \times H^{1/2}(\partial \Omega, \Ci^d)}
= 0
\]
for all $v \in H^1(\Omega,\Ci^d)$.
Hence if $v \in V$, then $\divv v = 0$ and  
\[
\gota(u,v) = \int_\Omega \nabla u \cdot \overline{\nabla v} 
= \int_\Omega f \cdot \overline v
= (f,v)_{L_2(\Omega,\Ci^d)}
.  \]
So $u \in \dom(A^N)$ and $A^N u = f$.
\end{proof}

We next turn to a Stokes version of the Dirichlet-to-Neumann operator.
For all $\lambda \in \Ri$ define $\gotb_\lambda \colon V \times V \to \Ci$
by 
\[
\gotb_\lambda(u,v) 
= \gota(u,v) - \lambda \, (u,v)_H
.  \]
Let $\cn_\lambda$ be the self-adjoint graph in $L_2(\partial \Omega,\Ci^d)$
associated with $(\gotb_\lambda,j)$.
We call $\cn_\lambda$ the {\bf Stokes Dirichlet-to-Neumann graph}.
The singular part $\cn_\lambda^\circ$ is a self-adjoint operator in the 
Hilbert space $\overline{D(\cn_\lambda)}$, where the closure is in 
$L_2(\partial \Omega, \Ci^d)$.
Note that 
$\overline{D(\cn_\lambda)}
\subset \overline{j(V)}
= \overline{\Tr V}
= L_{2,0}(\partial \Omega,\Ci^d)$
by Lemma~\ref{lstokes402}.
We next characterise $\cn_\lambda^\circ$ in the case where
$\lambda$ is not an eigenvalue of $A^D$.

\begin{prop} \label{pstokes404}
Let $\lambda \in \Ri \setminus \sigma(A^D)$.
Let $\varphi,\psi \in L_{2,0}(\partial \Omega,\Ci^d)$.
Then the following are equivalent.
\begin{tabeleq}
\item \label{pstokes404-1}
$\varphi \in D(\cn_\lambda^\circ)$ and $\cn_\lambda^\circ \varphi = \psi$.
\item \label{pstokes404-2}
There exist $u \in H^1(\Omega,\Ci^d)$ and $\pi \in L_2(\Omega)$
such that
\begin{itemize}
\item
\quad $\varphi = \Tr u$,
\item
\quad $\divv u = 0$,  
\item
\quad $- \Delta u + \nabla \pi = \lambda \, u$ in $H^{-1}(\Omega,\Ci^d)$, and
\item
\quad $\psi = \partial_\nu(u,\pi)$.
\end{itemize}
\end{tabeleq}
\end{prop}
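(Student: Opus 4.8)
Here is how I would prove Proposition~\ref{pstokes404}.

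The plan is to read off both implications from the single identity
\[
\langle \partial_\nu(u,\pi), \Tr v \rangle_{H^{-1/2}(\partial\Omega,\Ci^d) \times H^{1/2}(\partial\Omega,\Ci^d)}
= \gotb_\lambda(u,v)
\qquad (v \in V),
\]
valid for all $u \in H^1(\Omega,\Ci^d)$ and $\pi \in L_2(\Omega)$ with $- \Delta u + \nabla \pi = \lambda \, u$ in $H^{-1}(\Omega,\Ci^d)$; indeed, inserting $f = \lambda \, u$ and $\Phi = v$ in the defining formula for $\partial_\nu(u,\pi)$ and using $\divv v = 0$ leaves $\int_\Omega \nabla u \cdot \overline{\nabla v} - \lambda \int_\Omega u \cdot \overline v = \gotb_\lambda(u,v)$. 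As a preliminary I would establish that $\overline{D(\cn_\lambda)} = L_{2,0}(\partial\Omega,\Ci^d)$, which is the only place the hypothesis $\lambda \notin \sigma(A^D)$ is used. One inclusion is clear: $\nu \perp \Tr(V)$, so $(0,\nu) \in \cn_\lambda$ and $\Ci\,\nu \subseteq \mathrm{mul}\,\cn_\lambda$. Conversely, if $(0,\psi) \in \cn_\lambda$ with witness $u \in \ker j = V_D$, then testing $\gotb_\lambda(u,\cdot) = (\psi, j(\cdot))_K$ against $v \in V_D$ gives $\gota(u,v) = \lambda \, (u,v)_H$ for all $v \in V_D$; since $u \in V_D \subset L_{2,\sigma}(\Omega)$ by Proposition~\ref{pfried302}\ref{pfried302-1}, this means $u \in D(A^D)$ and $A^D u = \lambda \, u$ by the definition of $A^D$, hence $u = 0$ because $\lambda \notin \sigma(A^D)$, whence $(\psi, \Tr v)_K = \gotb_\lambda(0,v) = 0$ for all $v \in V$ and $\psi \in \Tr(V)^\perp = \Ci\,\nu$ by Lemma~\ref{lstokes402}. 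Thus $\mathrm{mul}\,\cn_\lambda = \Ci\,\nu$ and $\overline{D(\cn_\lambda)} = L_{2,0}(\partial\Omega,\Ci^d)$, so for $\varphi,\psi \in L_{2,0}(\partial\Omega,\Ci^d)$ the assertion in~\ref{pstokes404-1} is equivalent to $(\varphi,\psi) \in \cn_\lambda$.

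Granting this, `\ref{pstokes404-2}$\Rightarrow$\ref{pstokes404-1}' is short: given $u$ and $\pi$ as in~\ref{pstokes404-2}, the condition $\divv u = 0$ puts $u \in V$, and since $\psi = \partial_\nu(u,\pi) \in L_2(\partial\Omega,\Ci^d)$ the identity above gives $\gotb_\lambda(u,v) = (\psi, \Tr v)_K$ for all $v \in V$; together with $\Tr u = \varphi$ this is exactly $(\varphi,\psi) \in \cn_\lambda$, hence $\varphi \in D(\cn_\lambda^\circ)$ and $\cn_\lambda^\circ \varphi = \psi$.

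For `\ref{pstokes404-1}$\Rightarrow$\ref{pstokes404-2}' I would pick, from $(\varphi,\psi) \in \cn_\lambda$, a $u \in V$ with $\Tr u = \varphi$ and $\gotb_\lambda(u,v) = (\psi, \Tr v)_K$ for all $v \in V$. Testing against $v \in V_D$ gives $\langle - \Delta u - \lambda \, u, v \rangle_{H^{-1} \times H^1_0} = 0$ for all $v \in H^1_0(\Omega,\Ci^d)$ with $\divv v = 0$, so by \cite{Sohr} Lemma~II.2.1.1b (the De Rham-type argument used in Proposition~\ref{pfried302}) there is a $\pi \in L_2(\Omega)$ with $- \Delta u + \nabla \pi = \lambda \, u$ in $H^{-1}(\Omega,\Ci^d)$. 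Then $\partial_\nu(u,\pi)$ is defined, and the identity above shows $\langle \partial_\nu(u,\pi) - \psi, \Tr v \rangle = 0$ for all $v \in V$, i.e.\ $\partial_\nu(u,\pi)$ and $\psi$ agree on $\Tr(V) = H^{1/2}(\partial\Omega,\Ci^d) \cap L_{2,0}(\partial\Omega,\Ci^d)$ by Lemma~\ref{lstokes401}. To kill the remaining one-dimensional ambiguity I would fix $\varphi_0 \in H^{1/2}(\partial\Omega,\Ci^d)$ with $(\varphi_0,\nu)_K \neq 0$, choose $c \in \Ci$ with $\langle \partial_\nu(u,\pi) - c\,\nu - \psi, \varphi_0 \rangle = 0$, and use $H^{1/2}(\partial\Omega,\Ci^d) = \spann(\{\varphi_0\} \cup (H^{1/2}(\partial\Omega,\Ci^d) \cap L_{2,0}(\partial\Omega,\Ci^d)))$ together with~(\ref{efriedS4;1}) to conclude $\partial_\nu(u, \pi + c\,\one_\Omega) = \psi$ --- exactly the argument in the proof of Proposition~\ref{pfried303}\ref{pfried303-2}. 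Replacing $\pi$ by $\pi + c\,\one_\Omega$, which leaves $-\Delta u + \nabla\pi$ unchanged, then supplies the data in~\ref{pstokes404-2}.

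I expect the only non-routine step to be the preliminary identity $\overline{D(\cn_\lambda)} = L_{2,0}(\partial\Omega,\Ci^d)$: if $\lambda$ were an eigenvalue of $A^D$ the multivalued part of $\cn_\lambda$ could be strictly larger than $\Ci\,\nu$, and then $\cn_\lambda^\circ\varphi$ would only be determined modulo $\mathrm{mul}\,\cn_\lambda$, so that the precise equality $\cn_\lambda^\circ\varphi = \psi$ in~\ref{pstokes404-1} would no longer be forced by $(\varphi,\psi) \in \cn_\lambda$. Everything else reduces to the De Rham lemma, the trace characterisation of $V$ in Lemma~\ref{lstokes401}, and the constant-adjustment device already used for $A^N$.
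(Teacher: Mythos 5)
Your proof is correct and follows essentially the same route as the paper: the De Rham-type lemma (\cite{Sohr} Lemma~II.2.1.1b) to produce the pressure $\pi$, Lemma~\ref{lstokes401} to identify $\Tr(V)$, and the constant adjustment via (\ref{efriedS4;1}) to upgrade agreement on $H^{1/2}(\partial\Omega,\Ci^d)\cap L_{2,0}(\partial\Omega,\Ci^d)$ to equality in $H^{-1/2}(\partial\Omega,\Ci^d)$. Your preliminary observation that $\mathrm{mul}\,\cn_\lambda=\Ci\,\nu$ and $\overline{D(\cn_\lambda)}=L_{2,0}(\partial\Omega,\Ci^d)$ is a useful addition, as it makes explicit where the hypothesis $\lambda\notin\sigma(A^D)$ enters when identifying statement~\ref{pstokes404-1} with the graph relation $(\varphi,\psi)\in\cn_\lambda$, a point the paper's proof treats as implicit in ``by definition''.
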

\begin{proof}
`\ref{pstokes404-1}$\Rightarrow$\ref{pstokes404-2}'.
By definition there exists a $u \in V$ such that $\Tr u = \varphi$ and 
$\gotb_\lambda(u,v) = (\psi,\Tr v)_{L_2(\partial \Omega,\Ci^d)}$ for all $v \in V$.
Consider $(-\Delta u - \lambda \, u) \in H^{-1}(\Omega,\Ci^d)$.
If $v \in H^1_0(\Omega,\Ci^d)$ and $\divv v = 0$, then 
\[
(-\Delta u - \lambda \, u)(v)
= \gotb_\lambda(u,v)
= (\psi,\Tr v)_{L_2(\partial \Omega,\Ci^d)}
= 0
.  \]
Since $\Omega$ has a Lipschitz boundary, it follows from 
\cite{Sohr} Lemma~II.2.1.1b that there exists a $\pi \in L_2(\Omega)$
such that $-\Delta u - \lambda \, u = - \nabla \pi$ in $H^{-1}(\Omega,\Ci^d)$.

Let $v \in V$.
Then 
\begin{eqnarray*}
(\psi,\Tr v)_{L_2(\partial \Omega,\Ci^d)}
& = & \gotb_\lambda(u,v)
= \int_\Omega \nabla u \cdot \overline{\nabla v} 
   - \lambda \int_\Omega u \cdot \overline v  \\
& = & \int_\Omega \nabla u \cdot \overline{\nabla v} 
   - \lambda \int_\Omega u \cdot \overline v 
   - \int_\Omega \pi \, \overline{\divv v}
= \langle \partial_\nu(u,\pi), \Tr v \rangle_{H^{-1/2} \times H^{1/2}}
.  
\end{eqnarray*}
So 
$\langle \partial_\nu(u,\pi) - \psi, \tau \rangle_{H^{-1/2} \times H^{1/2}} = 0$
for all $\tau \in \Tr(V) = H^{1/2}(\partial \Omega, \Ci^d) \cap L_{2,0}(\partial \Omega,\Ci^d)$
by Lemma~\ref{lstokes401}.
Now it follows as at the end of the proof of the implication `$\Rightarrow$' in 
the proof of Proposition~\ref{pfried303}\ref{pfried303-2} that there exists a 
$c \in \Ci$ such that $\partial_\nu(u,\pi + c \, \one_\Omega) = \psi$.

`\ref{pstokes404-2}$\Rightarrow$\ref{pstokes404-1}'.
Let $u$ and $\pi$ be as in \ref{pstokes404-2}.
Note that $u \in V$.
Let $v \in V$.
Then 
\begin{eqnarray*}
\gotb_\lambda(u,v)
& = & \int_\Omega \nabla u \cdot \overline{\nabla v} 
   - \lambda \int_\Omega u \cdot \overline v 
   - \int_\Omega \pi \, \overline{\divv v}  \\
& = & \langle \partial_\nu(u,\pi), \Tr v \rangle_{H^{-1/2} \times H^{1/2}}
= \langle \psi, \Tr v \rangle_{H^{-1/2} \times H^{1/2}}
= (\psi, \Tr v)_{L_2(\partial\Omega,\Ci^d)}
\end{eqnarray*}
and \ref{pstokes404-1} follows.
\end{proof}

Now we are able to state and prove the main theorem of this paper.
For an explicit description of the Neumann Stokes operator and 
Dirichlet Stokes operator we refer to Propositions~\ref{pfried303}\ref{pfried303-2} 
and~\ref{pfried302}\ref{pfried302-2}.

\begin{thm} \label{tfried305}
Let $\Omega \subset \Ri^d$ be a bounded open connected set with Lipschitz boundary.
Let $A^N$ and $A^D$ be the Neumann Stokes operator and Dirichlet Stokes operator on $\Omega$,
respectively.
Further let $\lambda_1^N \leq \lambda_2^N \leq \ldots$ and 
$\lambda_1^D \leq \lambda_2^D \leq \ldots$ be the eigenvalues of $A^N$ and~$A^D$
repeated with multiplicity, respectively.
Then 
\[
\lambda_{n+1}^N < \lambda_n^D
\]
for all $n \in \Ni$.
\end{thm}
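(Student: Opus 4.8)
The plan is to apply the abstract comparison theorem, Theorem~\ref{tstokes501}, with the concrete choices
$H = L_2(\Omega,\Ci^d)$, $V = \{u \in H^1(\Omega,\Ci^d) : \divv u = 0\}$, $K = L_2(\partial\Omega,\Ci^d)$, with $i \colon V \to H$ the (compact) inclusion, $j = \Tr$ the (compact) trace map, and $\gota(u,v) = \int_\Omega \nabla u \cdot \overline{\nabla v}$. All the structural hypotheses --- that $\gota$ is positive, symmetric, continuous and $i$-elliptic, that $i$ and $j$ are compact, and that $V_D = \ker j = \{u \in H^1_0(\Omega,\Ci^d):\divv u = 0\}$ --- have already been set up in this section, and Propositions~\ref{pfried302}\ref{pfried302-2} and~\ref{pfried303}\ref{pfried303-2} identify the resulting operators $A^D$ and $A^N$ as the Dirichlet and Neumann Stokes operators. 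So the entire task reduces to verifying the two nontrivial hypotheses \ref{tstokes501-1} and \ref{tstokes501-2} of Theorem~\ref{tstokes501}.

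For hypothesis~\ref{tstokes501-1}, I would argue exactly as in the proof of Theorem~\ref{tfried202}: suppose $u \in D(A^N) \cap V_D$ satisfies $A^N u = \lambda u$. Then $u \in H^1_0(\Omega,\Ci^d)$ with $\divv u = 0$, and by Proposition~\ref{pfried303}\ref{pfried303-2} there is a $\pi \in L_2(\Omega)$ with $-\Delta u + \nabla\pi = \lambda u$ in $H^{-1}$ (and the normal-derivative condition, which is automatic here since $u$ and its boundary data vanish). Extending $u$ by zero to $\tilde u \in H^1(\Ri^d,\Ci^d)$ and testing against $H^1(\Ri^d,\Ci^d)$ functions, one sees that each component $\tilde u_k$ satisfies a scalar equation of the form $-\Delta \tilde u_k = \lambda\tilde u_k - \partial_k\tilde\pi$, but the pressure term is the delicate point; the cleanest route is to take the divergence of $-\Delta u + \nabla\pi = \lambda u$, which gives $\Delta\pi = 0$ (using $\divv u = 0$), so $\pi$ is harmonic, hence smooth inside $\Omega$, and then each $\tilde u_k$ solves a second-order elliptic equation and vanishes on a nonempty open set (the exterior of $\overline\Omega$), whence $\tilde u_k \equiv 0$ by unique continuation (\cite{RS4} Theorem~XIII.57). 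Therefore $A^N$ has no eigenvector in $V_D$.

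For hypothesis~\ref{tstokes501-2}, fix $\lambda \in (0,\infty)$ and I would exhibit an infinite-dimensional family of functions $u$ with $\Tr u \in D(\cn_\lambda)$ and $(\cn_\lambda^\circ \Tr u, \Tr u)_K = 0$. The natural candidates are divergence-free vector-valued oscillating solutions of the Stokes equations at spectral parameter $\lambda$: for $\omega \in \Ri^d$ with $|\omega|^2 = \lambda$ and a constant vector $a \in \Ci^d$ with $a \cdot \omega = 0$, set $u(x) = a\,e^{i\omega\cdot x}$. Then $\divv u = i(a\cdot\omega)e^{i\omega\cdot x} = 0$ and $-\Delta u = \lambda u$, so with pressure $\pi = 0$ we have $-\Delta u + \nabla\pi = \lambda u$ in $H^{-1}$; by Proposition~\ref{pstokes404} (for $\lambda \notin \sigma(A^D)$) or directly by the definition of $\cn_\lambda$ together with the divergence-theorem computation used in Theorem~\ref{tfried202}, one gets $\Tr u \in D(\cn_\lambda)$ with $\cn_\lambda^\circ\Tr u = \partial_\nu(u,0)$, and
\[
(\cn_\lambda^\circ \Tr u, \Tr u)_K
= \gotb_\lambda(u,u)
= \int_\Omega |\nabla u|^2 - \lambda\int_\Omega |u|^2
= \int_\Omega \bigl(\lambda|a|^2 - \lambda|a|^2\bigr) = 0
\]
since $|\nabla u|^2 = |\omega|^2|a|^2 = \lambda|a|^2$ pointwise. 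Varying $\omega$ over the sphere $|\omega|^2 = \lambda$ (and, if $d \geq 3$, also varying the admissible $a \perp \omega$) yields infinitely many linearly independent such $u$ whose traces are linearly independent in $L_2(\partial\Omega,\Ci^d)$ --- linear independence of the boundary traces follows because distinct exponentials are linearly independent as functions on $\Omega$ and a finite linear combination with nonzero trace and vanishing interior would contradict unique continuation, or alternatively because any finite linear combination is a solution of an elliptic system and cannot vanish on $\partial\Omega$ to first order without vanishing identically. This gives $\dim\spann\{\varphi \in D(\cn_\lambda):(\cn_\lambda^\circ\varphi,\varphi)_K = 0\} = \infty$.

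The main obstacle I anticipate is hypothesis~\ref{tstokes501-2}, specifically the bookkeeping around the pressure and the linear independence of the boundary traces: one must be careful that the plane-wave ansatz genuinely lands in $D(\cn_\lambda)$ (the argument via the divergence theorem only directly handles test functions $v \in C_b^\infty(\overline\Omega,\Ci^d)$ and then extends by density of $\Tr$ to all of $V$, exactly as in the proof of Theorem~\ref{tfried202}), and that the countably many traces are truly independent rather than collapsing on $\partial\Omega$. The case $d = 2$ is slightly tighter because the orthogonality constraint $a \perp \omega$ leaves only a one-dimensional choice of $a$ for each $\omega$, but the sphere $\{|\omega|^2 = \lambda\}$ is still infinite, so infinitely many independent traces remain. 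Hypothesis~\ref{tstokes501-1} is essentially a transcription of the Laplacian argument, with the only extra wrinkle being the reduction of the pressure via harmonicity; both are routine given the machinery already developed in the paper, so the whole proof is short once Theorem~\ref{tstokes501} is in hand.
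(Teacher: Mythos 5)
Your overall strategy is the paper's (apply Theorem~\ref{tstokes501} with $V=\{u\in H^1(\Omega,\Ci^d):\divv u=0\}$, and use divergence-free plane waves $a\,e^{i\omega\cdot x}$, $a\perp\omega$, $|\omega|^2=\lambda$, for hypothesis~\ref{tstokes501-2}), but your verification of hypothesis~\ref{tstokes501-1} has a genuine gap, and it sits exactly where the Stokes problem differs from the Laplacian: the pressure is never eliminated. From $-\Delta u+\nabla\pi=\lambda u$ in $\Omega$ you deduce only that $\pi$ is harmonic \emph{inside} $\Omega$; this does not make $\nabla\pi$ vanish, so the extended components would at best satisfy $-\Delta\tilde u_k=\lambda\tilde u_k-\partial_k\tilde\pi$, which is not of the form covered by the unique continuation theorem you cite (\cite{RS4} Theorem~XIII.57 needs a differential inequality $|\Delta w|\le c\,(|w|+|\nabla w|)$; an extraneous term $\partial_k\tilde\pi$, not controlled by $u$, is not allowed). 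Moreover, your parenthetical that $\partial_\nu(u,\pi)=0$ is ``automatic since $u$ and its boundary data vanish'' is false ($u\in H^1_0$ says nothing about the conormal data of the pair $(u,\pi)$), and without invoking either that condition or the weak formulation against test functions that do not vanish on $\partial\Omega$, you have no justification that $\tilde u$ satisfies \emph{any} equation across the boundary: extending a solution by zero produces distributional terms on $\partial\Omega$ unless the relevant conormal data vanish. The paper's route is: since $A^Nu=\lambda u$, the identity $\gota(u,v)=\lambda\,(u,v)_H$ holds for all divergence-free $v\in H^1(\Omega,\Ci^d)$ with no boundary condition on $v$; as $u\in H^1_0$, testing against restrictions of divergence-free $v\in H^1(\Ri^d,\Ci^d)$ shows $\tilde u$ is an eigenfunction of the Stokes operator on $\Ri^d$, and then \cite{Sohr} Lemma~III.2.3.2 yields $-\Delta\tilde u=\lambda\tilde u$ on $\Ri^d$ --- it is the whole-space structure, not interior harmonicity, that kills the pressure --- after which unique continuation applies componentwise. (A repair close to your sketch does exist: use $\partial_\nu(u,\pi)=0$, which Proposition~\ref{pfried303}\ref{pfried303-2} provides, in its weak form to get $-\Delta\tilde u+\nabla\tilde\pi=\lambda\tilde u$ on $\Ri^d$ with $\tilde\pi$ the zero extension of $\pi$; then $\tilde\pi$ is harmonic on all of $\Ri^d$ and lies in $L_2(\Ri^d)$, hence $\tilde\pi=0$. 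But some such global argument is indispensable, and your write-up contains neither.)

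Your treatment of hypothesis~\ref{tstokes501-2} is essentially the paper's proof; the observation that $\gotb_\lambda(u,u)=\int_\Omega|\nabla u|^2-\lambda\int_\Omega|u|^2=0$ pointwise is even a bit cleaner than the paper's boundary-integral computation, and the membership $\Tr u\in D(\cn_\lambda)$ via the divergence theorem is handled as in Theorem~\ref{tfried202}. One caveat: your justification of infinite dimension overclaims. The traces of the plane waves need not all be linearly independent --- a finite divergence-free combination can vanish on $\partial\Omega$ when $\lambda$ happens to be a Dirichlet eigenvalue (your ``vanish on $\partial\Omega$ to first order'' alternative assumes Cauchy data, not just the trace). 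What is true, and suffices, is that the kernel of $\Tr$ on the infinite-dimensional span of these plane waves is contained in a (finite-dimensional) Dirichlet eigenspace, so the span of the traces is still infinite-dimensional; the paper is equally terse on this point, so this is a small, fixable slip rather than a structural problem.
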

\begin{proof}
We have to verify Conditions~\ref{tstokes501-1} and \ref{tstokes501-2}
of Theorem~\ref{tstokes501}.

`\ref{tstokes501-1}'.
Let $u \in D(A^N) \cap \ker j$ and suppose that $u$ is an 
eigenvector for $A^N$ with eigenvalue $\lambda \in \Ri$.
Then $u \in H^1_0(\Omega,\Ci^d)$ and $\divv u = 0$ in $\Omega$.
Let $\tilde u \in H^1(\Ri^d,\Ci^d)$ be the extension by zero of $u$.
Then $\nabla \tilde u = \widetilde{\nabla u}$, the extension by zero 
of $\nabla u$.
So $\divv \tilde u = 0$ in $\Ri^d$.
Let $v \in H^1(\Ri^d,\Ci^d)$ and suppose that $\divv v = 0$ in $\Ri^d$.
Then 
\[
\int_{\Ri^d} \nabla \tilde u \cdot \overline{ \nabla v}
= \int_\Omega \nabla u \cdot \overline{ \nabla (v|_\Omega)}
= \gota(u, v|_\Omega)
= (\lambda \, u, v|_\Omega)_{L_2(\Omega,\Ci^d)}
= (\lambda \, \tilde u, v)_{L_2(\Ri^d,\Ci^d)}
.  \]
So $\tilde u \in D(A_{\Ri^d})$, the Stokes operator on $\Ri^d$,
and $A_{\Ri^d} \tilde u = \lambda \, \tilde u$.
Then $- \Delta \tilde u = \lambda \, \tilde u$ by \cite{Sohr} Lemma~III.2.3.2.
But $\tilde u$ vanishes on a non-empty open set.
So $\tilde u = 0$ by the unique continuation property, see
for example \cite{RS4} Theorem~XIII.57.
Hence $u = 0$ and $u$ is not an eigenvector.

`\ref{tstokes501-2}'.
Let $\lambda > 0$.
Let $\omega \in \Ri^d$ and suppose that $|\omega|^2 = \lambda$.
Let $b \in \Ri^d$ be such that $|b| = 1$ and $b \cdot \omega = 0$.
Define $\tau \in H^2(\Omega)$ by $\tau(x) = e^{i \omega \cdot x}$.
Then $- \Delta \tau = |\omega|^2 \, \tau = \lambda \, \tau$.
Let $u = \tau \, b \in H^1(\Omega,\Ci^d)$.
Then $\divv u = i \tau \, (b \cdot \omega) = 0$, so $u \in V$.
Let $v \in V$.
Then 
\begin{eqnarray*}
\gotb_\lambda(u,v)
& = & \int_\Omega \nabla u \cdot \overline{ \nabla v} - \lambda \int_\Omega u \cdot \overline v  \\
& = & \int_\Omega \nabla \tau \cdot \overline{ \nabla (b \cdot v) }
    - \lambda \int_\Omega \tau \, \overline{(b \cdot v)} \\
& = & - \int_\Omega (\Delta \tau)  \, \overline{(b \cdot v)}
   + \int_{\partial \Omega} (\partial_\nu \tau) \, \overline{ \Tr (b \cdot v) }
    - \lambda \int_\Omega \tau \, \overline{(b \cdot v)}  \\
& = & \int_{\partial \Omega} (\partial_\nu \tau) \, b \cdot \overline{ \Tr v}
= \int_{\partial \Omega} i \, (\omega \cdot \nu) \, (\Tr \tau) \, b \cdot \overline{ \Tr v}
.
\end{eqnarray*}
Therefore $j(u) \in D(\cn_\lambda)$ and 
$(j(u),  i \, (\omega \cdot \nu) \, (\Tr \tau) \, b) \in \cn_\lambda$.
Moreover, 
\begin{eqnarray*}
(\cn_\lambda^\circ j(u), j(u))_{L_2(\Omega, \Ci^d)}
& = & \gotb_\lambda(u,u)  \\
& = & i \int_{\partial \Omega} (\omega \cdot \nu) \, |b|^2 \, |\Tr \tau|^2
= i \, |b|^2 \int_{\partial \Omega} (\omega \cdot \nu)
= i \, |b|^2 \int_\Omega \divv \omega
= 0
.
\end{eqnarray*}
Consequently
\[
\dim \spann \{ \varphi \in D(\cn_\lambda) : (\cn_\lambda^\circ \varphi,\varphi)_{L_2(\Omega, \Ci^d)} = 0 \} 
= \infty
\]
as required.
\end{proof}

We finish with a small extension of the previous theorem.

\begin{exam} \label{xstokes508}
Let $\Omega \subset \Ri^d$ be a bounded open connected set with Lipschitz boundary.
Let $H = L_2(\Omega,\Ci^d)$,
\[
V = \{ u \in H^1(\Omega,\Ci^d) : \divv u = 0 \}
\]
and $K = L_2(\partial \Omega, \Ci^d)$.
Then the inclusion $i \colon V \to H$ is compact.
Let $j = \Tr \colon V \to K$ be the trace operator.
Then $j$ is compact.
Fix $\alpha \in (-1,1]$.
Define the form $\gota \colon V \times V \to \Ci$ by 
\[
\gota(u,v)
= \int_\Omega \nabla u \cdot \overline{ \nabla v}
+ \alpha \int_\Omega (\nabla u)^{\rm T} \cdot \overline{ \nabla v}
,  \]
where the superscript ${\rm T}$ denotes the transpose.
So Theorem~\ref{tfried305} corresponds to the case~$\alpha = 0$.
Obviously $\gota$ is continuous and if $|\alpha| < 1$, then 
$\gota$ is $i$-elliptic.
If $\alpha = 1$, then
\[
\RRe \gota(u)
= \frac{1}{2} \int_\Omega |\nabla u + (\nabla u)^{\rm T}|^2
\]
for all $u \in V$.
Since $\Omega$ has a Lipschitz boundary, Korn's second inequality states that 
there exists a $C > 0$ 
such that 
\[
\|u\|_{H^1(\Omega,\Ci^d)}^2
\leq C \Big( \int_\Omega |\nabla u + (\nabla u)^{\rm T}|^2
             + \| u \|_{L_2(\Omega,\Ci^d)}^2 \Big)
\]
for all $u \in H^1(\Omega,\Ci^d)$.
For an easy proof, see \cite{Nitsche} Section~3.
Hence the form $\gota$ is also $i$-elliptic if $\alpha = 1$.
The associated operator $A^N$ is the Neumann Stokes operator studied in \cite{MMW}
and $A^D$ is the (Dirichlet) Stokes operator.
We next show that the conditions in Theorem~\ref{tstokes501} are valid.

`\ref{tstokes501-1}'.
Let $u \in D(A^N) \cap \ker j$ and suppose that $u$ is an 
eigenvector for $A^N$ with eigenvalue $\lambda \in \Ri$.
Then as before $u \in H^1_0(\Omega,\Ci^d)$ and $\divv u = 0$ in $\Omega$.
Let $\tilde u \in H^1(\Ri^d,\Ci^d)$ be the extension by zero of $u$.
Again $\nabla \tilde u = \widetilde{\nabla u}$, the extension by zero 
of $\nabla u$, and $\divv \tilde u = 0$ in $\Ri^d$.
Let $v \in H^1(\Ri^d,\Ci^d)$ and suppose that $\divv v = 0$ in $\Ri^d$.
Then 
\begin{eqnarray*}
\int_{\Ri^d} \Big( \nabla \tilde u + \alpha \, (\nabla \tilde u)^{\rm T} \Big) 
        \cdot \overline{ \nabla v}
& = & \int_\Omega \Big( \nabla u + \alpha \, (\nabla u)^{\rm T} \Big) \cdot \overline{ \nabla (v|_\Omega)}
= \gota(u, v|_\Omega)  \\
& = & (\lambda \, u, v|_\Omega)_{L_2(\Omega,\Ci^d)}
= (\lambda \, \tilde u, v)_{L_2(\Ri^d,\Ci^d)}
.  
\end{eqnarray*}
Since $\divv \tilde u = 0$, one deduces that 
\[
\int_{\Ri^d}  (\nabla \tilde u)^{\rm T} 
        \cdot \overline{ \nabla v}
= 0
.  \]
Therefore $\tilde u \in D(A_{\Ri^d})$, the Stokes operator on $\Ri^d$,
and $A_{\Ri^d} \tilde u = \lambda \, \tilde u$.
Now one can argue as in the proof of Theorem~\ref{tfried305} that $u = 0$
and hence $u$ is not an eigenvector.

`\ref{tstokes501-2}'.
Let $\lambda > 0$.
Let $\omega \in \Ri^d$ and suppose that $|\omega|^2 = \lambda$.
As before let $b \in \Ri^d$ and $\tau \in H^2(\Omega)$
be such that $|b| = 1$, $b \cdot \omega = 0$ and $\tau(x) = e^{i \omega \cdot x}$.
Then $\Delta \tau = - |\omega|^2 \, \tau = - \lambda \, \tau$.
Let $u = \tau \, b$.
Then $\divv u = i \tau \, (b \cdot \omega) = 0$, so $u \in V$.
Let $v \in V$.
Then 
\begin{eqnarray*}
\gotb_\lambda(u,v)
& = & \int_\Omega \nabla u \cdot \overline{ \nabla v} 
   + \alpha \int_\Omega (\nabla u)^{\rm T} \cdot \overline{ \nabla v}
   - \lambda \int_\Omega u \cdot \overline v  \\
& = & \int_\Omega \nabla \tau \cdot \overline{ \nabla (b \cdot v) }
   + \alpha \sum_{k,l=1}^d \int_\Omega i \, \omega_k \, \tau \, b_l \, \overline{\partial_l v_k}
    - \lambda \int_\Omega \tau \, \overline{(b \cdot v)} \\
& = & - \int_\Omega (\Delta \tau)  \, \overline{(b \cdot v)}
   + \int_{\partial \Omega} (\partial_\nu \tau) \, \overline{ \Tr (b \cdot v) }  \\*
& & \hspace*{10mm} {}
   + \alpha \sum_{k,l=1}^d \int_\Omega \omega_k \, \omega_l \, \tau \, b_l \, \overline{v_k}
   + i \, \alpha \sum_{k,l=1}^d \int_{\partial \Omega} \omega_k \, (\Tr \tau) \, \nu_l \, b_l \, \overline{\Tr v_k}
    - \lambda \int_\Omega \tau \, \overline{(b \cdot v)}  \\
& = & \int_{\partial \Omega} (\partial_\nu \tau) \, b \cdot \overline{ \Tr v } 
   + i \, \alpha \int_{\partial \Omega} (\Tr \tau) \, (\nu \cdot b) \, \omega \cdot \overline{\Tr v}
,
\end{eqnarray*}
where we used in the last step that $b \cdot \omega = 0$.
Consequently $j(u) \in D(\cn_\lambda)$ and 
$(j(u),  (\partial_\nu \tau) \, b + i \alpha \, (\Tr \tau) \, (\nu \cdot b) \, \omega) \in \cn_\lambda$.
Moreover, 
\[
(\cn_\lambda^\circ j(u), j(u))_{L_2(\Omega, \Ci^d)}
= \gotb_\lambda(u,u)  \\
=  i \int_{\partial \Omega} (\omega \cdot \nu) \, |b|^2 \, |\Tr \tau|^2
,  \]
where we used once more that $b \cdot \omega = 0$.
Then argue as in the proof of Theorem~\ref{tfried305}.
\end{exam}

\section*{Acknowledgement}

The authors wish to thank Sylvie Monniaux for helpful discussions.
The authors are most grateful for the hospitality and fruitful stay of
the first-named author at the University of Auckland and the second-named
author at the Aix-Marseille Universit\'e. 
This work is partly supported by the Aix-Marseille Universit\'e, 
an NZ-EU IRSES counterpart fund and the Marsden Fund Council from
Government funding, administered by the Royal Society of New Zealand and the EU
Marie Curie IRSES program, project ‘AOS’, No. 318910.

\small 

\noindent
{\sc C. Denis,
Centre de Math\'ematiques et Informatique (CMI) Technop\^{o}le Ch\^{a}teau-Gombert,
39, rue F. Joliot Curie,
13453 Marseille Cedex 13, 
France}  \\
{\em E-mail address}\/: {\bf clement.denis@univ-amu.fr}

\smallskip

\noindent
{\sc A.F.M. ter Elst,
Department of Mathematics,
University of Auckland,
Private bag 92019,
Auckland 1142,
New Zealand}  \\
{\em E-mail address}\/: {\bf terelst@math.auckland.ac.nz}

\mbox{}

\end{document}